\newcounter{are-there-sections}
\newcommand{\ujj}[1]{{%
}}
\def\me{S\'ANDOR J KOV\'ACS\xspace}
\def\mythanks{Supported in part by NSF Grants 
 DMS-0856185,  DMS-1301888, and the
Craig McKibben and Sarah Merner Endowed Professorship in Mathematics at the
University of Washington.}
\def\myaddress{University of Washington, Department of Mathematics, Box 354350,
Seattle, WA 98195-4350, USA}
\def\myemail{skovacs@uw.edu\xspace}
\def\myurladdr{http://www.math.washington.edu/$\sim$kovacs\xspace}
\DeclareMathAlphabet{\smallchanc}{OT1}{pzc}%
                                 {m}{it}
\DeclareFontFamily{OT1}{pzc}{}
\DeclareFontShape{OT1}{pzc}{m}{it}%
             {<-> s * [1.100] pzcmi7t}{}
\DeclareMathAlphabet{\mathchanc}{OT1}{pzc}%
                                 {m}{it}
\newcommand{\mcR}{\mathchanc{R}}
\newcommand{\mcb}{\mathchanc{b}}
\newcommand{\mcd}{\mathchanc{d}}
\DeclareFontFamily{OMS}{rsfs}{\skewchar\font'60}
\DeclareFontShape{OMS}{rsfs}{m}{n}{<-5>rsfs5 <5-7>rsfs7 <7->rsfs10 }{}
\DeclareSymbolFont{rsfs}{OMS}{rsfs}{m}{n}
\DeclareSymbolFontAlphabet{\scr}{rsfs}
\newcommand{\sI}{\scr{I}}
\newcommand{\sO}{\scr{O}}
\newcommand{\bC}{\mathbb{C}}
\newcommand{\bN}{\mathbb{N}}
\newcommand{\into}{\hookrightarrow}
\newcommand{\wt}{\widetilde}
\DeclareMathOperator{\codim}{codim}
\DeclareMathOperator{\coker}{{coker}}
\DeclareMathOperator{\exc}{Exc}
\newcommand{\factor}[2]{\left. \raise 2pt\hbox{\ensuremath{#1}} \right/
        \hskip -2pt\raise -2pt\hbox{\ensuremath{#2}}}
\newcommand{\myR}{{\mcR\!}}
\newcommand{\bdot}{\raisebox{.05em}{\hskip-.2em\ensuremath\kdot}}
\newcommand{\kdot}{{{\,\begin{picture}(1,1)(-1,-2)\circle*{2}\end{picture}\ }}}
\newcommand{\Om}{\underline{\Omega}}
\newcommand{\Ox}[1]{\underline{\Omega}_X^{\,#1}}
\newcommand{\Os}[1]{\underline{\Omega}_{\Sigma}^{\,#1}}
\newcommand{\Oxs}[1]{\underline{\Omega}_{X,\Sigma}^{\,#1}}
\newcommand{\Oy}[1]{\underline{\Omega}_Y^{\,#1}}
\newcommand{\Og}[1]{\underline{\Omega}_{\Gamma}^{\,#1}}
\newcommand{\Oyg}[1]{\underline{\Omega}_{Y,\Gamma}^{\,#1}}
\newcommand{\db}{\mcd\mcb}
\def\coh#1.#2.#3.{H^{#1}(#2,#3)}
\def\dimcoh#1.#2.#3.{h^{#1}(#2,#3)}
\def\hypcoh#1.#2.#3.{\mathbb H_{\vphantom{l}}^{#1}(#2,#3)}
\def\loccoh#1.#2.#3.#4.{H^{#1}_{#2}(#3,#4)}
\def\dimloccoh#1.#2.#3.#4.{h^{#1}_{#2}(#3,#4)}
\def\lochypcoh#1.#2.#3.#4.{\mathbb H^{#1}_{#2}(#3,#4)}
\def\ses#1.#2.#3.{0  \longrightarrow  #1   \longrightarrow 
 #2 \longrightarrow #3 \longrightarrow 0} 
\def\sesshort#1.#2.#3.{0
 \rightarrow #1 \rightarrow #2 \rightarrow #3 \rightarrow 0}
\def\dist#1.#2.#3.{  #1   \longrightarrow 
 #2 \longrightarrow #3 \stackrel{+1}{\longrightarrow} } 
\def\CDdist#1.#2.#3.{  #1   @>>>  #2  @>>>   #3 @>+1>> }  
\def\shortses#1.#2.#3.{0  \rightarrow  #1   \rightarrow 
 #2  \rightarrow   #3 \rightarrow  0}
\def\shortdist#1.#2.#3.{  #1   \rightarrow 
 #2  \rightarrow   #3 \stackrel{+1}{\rightarrow} }  
\def\ddist#1.#2.#3.#4.#5.#6.{\CD
#1 @>>> #2 @>>> #3 @>+1>> \\
@VVV @VVV @VVV \\
#4 @>>> #5 @>>> #6 @>+1>> 
\endCD}
\def\ddistun#1.#2.#3.#4.#5.#6.{\CD
#1 @>>> #2 @>>> #3 @>+1>> \\
@. @VVV @VVV  \\
#4 @>>> #5 @>>> #6 @>+1>> 
\endCD}
\def\Iff#1#2#3{
\hfil\hbox{\hsize =#1
\vtop{\noin #2}
\hskip.5cm 
\lower.5\baselineskip\hbox{$\Leftrightarrow$}\hskip.5cm
\vtop{\noin #3}}\hfil\medskip}
\newcommand{\union}\cup
\newcommand{\intersect}\cap
\newcommand{\Union}\bigcup
\newcommand{\Intersect}\bigcap
\def\myoplus#1.#2.{\underset #1 \to {\overset #2 \to \oplus}}
\def\qis{\,{\simeq}_{\text{qis}}\,}
\begin{document}
\makeatletter
\newenvironment{refmr}{}{}
\renewcommand{\labelenumi}{\hskip .5em(\thethm.\arabic{enumi})}
\renewcommand\thesubsection{\thesection.\Alph{subsection}}
\renewcommand\subsection{
  \renewcommand{\sfdefault}{pag}
  \@startsection{subsection}%
  {2}{0pt}{-\baselineskip}{.2\baselineskip}{\raggedright
    \sffamily\itshape\small
  }}
\renewcommand\section{
  \renewcommand{\sfdefault}{phv}
  \@startsection{section} %
  {1}{0pt}{\baselineskip}{.2\baselineskip}{\centering
    \sffamily
    \scshape
}}
\newcounter{lastyear}\setcounter{lastyear}{\the\year}
\addtocounter{lastyear}{-1}
\newcommand\sideremark[1]{%
\normalmarginpar
\marginpar
[
\hskip .45in
\begin{minipage}{.75in}
\tiny #1
\end{minipage}
]
{
\hskip -.075in
\begin{minipage}{.75in}
\tiny #1
\end{minipage}
}}
\newcommand\rsideremark[1]{
\reversemarginpar
\marginpar
[
\hskip .45in
\begin{minipage}{.75in}
\tiny #1
\end{minipage}
]
{
\hskip -.075in
\begin{minipage}{.75in}
\tiny #1
\end{minipage}
}}
\newcommand\Index[1]{{#1}\index{#1}}
\newcommand\inddef[1]{\emph{#1}\index{#1}}
\newcommand\noin{\noindent}
\newcommand\hugeskip{\bigskip\bigskip\bigskip}
\newcommand\smc{\sc}
\newcommand\dsize{\displaystyle}
\newcommand\sh{\subheading}
\newcommand\nl{\newline}
\newcommand\input /home/kovacs/tex/latex/{\input /home/kovacs/tex/latex/} 
\newcommand\Get{\Input /home/kovacs/tex/latex/} 
\newcommand\toappear{\rm (to appear)}
\newcommand\mycite[1]{[#1]}
\newcommand\myref[1]{(\ref{#1})}
\newcommand\myli{\hfill\newline\smallskip\noindent{$\bullet$}\quad}
\newcommand\vol[1]{{\bf #1}\ } 
\newcommand\yr[1]{\rm (#1)\ } 
\newcommand\cf{cf.\ \cite}
\newcommand\mycf{cf.\ \mycite}
\newcommand\te{there exist}
\newcommand\st{such that}
\newcommand\myskip{3pt}
\newtheoremstyle{bozont}{3pt}{3pt}%
     {\itshape}
     {}
     {\bfseries}
     {.}
     {.5em}
     {\thmname{#1}\thmnumber{ #2}\thmnote{ \rm #3}}
\newtheoremstyle{bozont-sf}{3pt}{3pt}%
     {\itshape}
     {}
     {\sffamily}
     {.}
     {.5em}
     {\thmname{#1}\thmnumber{ #2}\thmnote{ \rm #3}}
\newtheoremstyle{bozont-sc}{3pt}{3pt}%
     {\itshape}
     {}
     {\scshape}
     {.}
     {.5em}
     {\thmname{#1}\thmnumber{ #2}\thmnote{ \rm #3}}
\newtheoremstyle{bozont-remark}{3pt}{3pt}%
     {}
     {}
     {\scshape}
     {.}
     {.5em}
     {\thmname{#1}\thmnumber{ #2}\thmnote{ \rm #3}}
\newtheoremstyle{bozont-def}{3pt}{3pt}%
     {}
     {}
     {\bfseries}
     {.}
     {.5em}
     {\thmname{#1}\thmnumber{ #2}\thmnote{ \rm #3}}
\newtheoremstyle{bozont-reverse}{3pt}{3pt}%
     {\itshape}
     {}
     {\bfseries}
     {.}
     {.5em}
     {\thmnumber{#2.}\thmname{ #1}\thmnote{ \rm #3}}
\newtheoremstyle{bozont-reverse-sc}{3pt}{3pt}%
     {\itshape}
     {}
     {\scshape}
     {.}
     {.5em}
     {\thmnumber{#2.}\thmname{ #1}\thmnote{ \rm #3}}
\newtheoremstyle{bozont-reverse-sf}{3pt}{3pt}%
     {\itshape}
     {}
     {\sffamily}
     {.}
     {.5em}
     {\thmnumber{#2.}\thmname{ #1}\thmnote{ \rm #3}}
\newtheoremstyle{bozont-remark-reverse}{3pt}{3pt}%
     {}
     {}
     {\sc}
     {.}
     {.5em}
     {\thmnumber{#2.}\thmname{ #1}\thmnote{ \rm #3}}
\newtheoremstyle{bozont-def-reverse}{3pt}{3pt}%
     {}
     {}
     {\bfseries}
     {.}
     {.5em}
     {\thmnumber{#2.}\thmname{ #1}\thmnote{ \rm #3}}
\newtheoremstyle{bozont-def-newnum-reverse}{3pt}{3pt}%
     {}
     {}
     {\bfseries}
     {}
     {.5em}
     {\thmnumber{#2.}\thmname{ #1}\thmnote{ \rm #3}}
\theoremstyle{bozont}    
\ifnum \value{are-there-sections}=0 {%
  \newtheorem{proclaim}{Theorem}
} 
\else {%
  \newtheorem{proclaim}{Theorem}[section]
} 
\fi
\newtheorem{thm}[proclaim]{Theorem}
\newtheorem{mainthm}[proclaim]{Main Theorem}
\newtheorem{cor}[proclaim]{Corollary} 
\newtheorem{cors}[proclaim]{Corollaries} 
\newtheorem{lem}[proclaim]{Lemma} 
\newtheorem{prop}[proclaim]{Proposition} 
\newtheorem{conj}[proclaim]{Conjecture}
\newtheorem{subproclaim}[equation]{Theorem}
\newtheorem{subthm}[equation]{Theorem}
\newtheorem{subcor}[equation]{Corollary} 
\newtheorem{sublem}[equation]{Lemma} 
\newtheorem{subprop}[equation]{Proposition} 
\newtheorem{subconj}[equation]{Conjecture}
\theoremstyle{bozont-sc}
\newtheorem{proclaim-special}[proclaim]{\specialthmname}
\newenvironment{proclaimspecial}[1]
     {\def\specialthmname{#1}\begin{proclaim-special}}
     {\end{proclaim-special}}
\theoremstyle{bozont-remark}
\newtheorem{rem}[proclaim]{Remark}
\newtheorem{subrem}[equation]{Remark}
\newtheorem{notation}[proclaim]{Notation} 
\newtheorem{assume}[proclaim]{Assumptions} 
\newtheorem{obs}[proclaim]{Observation} 
\newtheorem{example}[proclaim]{Example} 
\newtheorem{examples}[proclaim]{Examples} 
\newtheorem{complem}[equation]{Complement}
\newtheorem{const}[proclaim]{Construction}   
\newtheorem{ex}[proclaim]{Exercise} 
\newtheorem{subnotation}[equation]{Notation} 
\newtheorem{subassume}[equation]{Assumptions} 
\newtheorem{subobs}[equation]{Observation} 
\newtheorem{subexample}[equation]{Example} 
\newtheorem{subex}[equation]{Exercise} 
\newtheorem{claim}[proclaim]{Claim} 
\newtheorem{inclaim}[equation]{Claim} 
\newtheorem{subclaim}[equation]{Claim} 
\newtheorem{case}{Case} 
\newtheorem{subcase}{Subcase}   
\newtheorem{step}{Step}
\newtheorem{approach}{Approach}
\newtheorem{fact}{Fact}
\newtheorem{subsay}{}
\newtheorem*{SubHeading*}{\SubHeadingName}%
\newtheorem{SubHeading}[proclaim]{\SubHeadingName}
\newtheorem{sSubHeading}[equation]{\sSubHeadingName}
\newenvironment{demo}[1] {\def\SubHeadingName{#1}\begin{SubHeading}}
  {\end{SubHeading}}%
\newenvironment{subdemo}[1]{\def\sSubHeadingName{#1}\begin{sSubHeading}}
  {\end{sSubHeading}} %
\newenvironment{demo-r}[1]{\def\SubHeadingName{#1}\begin{SubHeading-r}}
  {\end{SubHeading-r}}%
\newenvironment{subdemo-r}[1]{\def\sSubHeadingName{#1}\begin{sSubHeading-r}}
  {\end{sSubHeading-r}} %
\newenvironment{demo*}[1]{\def\SubHeadingName{#1}\begin{SubHeading*}}
  {\end{SubHeading*}}%
\newtheorem{defini}[proclaim]{Definition}
\newtheorem{question}[proclaim]{Question}
\newtheorem{subquestion}[equation]{Question}
\newtheorem{crit}[proclaim]{Criterion}
\newtheorem{pitfall}[proclaim]{Pitfall}
\newtheorem{addition}[proclaim]{Addition}
\newtheorem{principle}[proclaim]{Principle} 
\newtheorem{condition}[proclaim]{Condition}
\newtheorem{say}[proclaim]{}
\newtheorem{exmp}[proclaim]{Example}
\newtheorem{hint}[proclaim]{Hint}
\newtheorem{exrc}[proclaim]{Exercise}
\newtheorem{prob}[proclaim]{Problem}
\newtheorem{ques}[proclaim]{Question}    
\newtheorem{alg}[proclaim]{Algorithm}
\newtheorem{remk}[proclaim]{Remark}          
\newtheorem{note}[proclaim]{Note}            
\newtheorem{summ}[proclaim]{Summary}         
\newtheorem{notationk}[proclaim]{Notation}   
\newtheorem{warning}[proclaim]{Warning}  
\newtheorem{defn-thm}[proclaim]{Definition--Theorem}  
\newtheorem{convention}[proclaim]{Convention}  
\newtheorem*{ack}{Acknowledgment}
\newtheorem*{acks}{Acknowledgments}
\theoremstyle{bozont-def}    
\newtheorem{defn}[proclaim]{Definition}
\newtheorem{subdefn}[equation]{Definition}
\theoremstyle{bozont-reverse}    
\newtheorem{corr}[proclaim]{Corollary} 
\newtheorem{lemr}[proclaim]{Lemma} 
\newtheorem{propr}[proclaim]{Proposition} 
\newtheorem{conjr}[proclaim]{Conjecture}
\theoremstyle{bozont-reverse-sc}
\newtheorem{proclaimr-special}[proclaim]{\specialthmname}
\newenvironment{proclaimspecialr}[1]%
{\def\specialthmname{#1}\begin{proclaimr-special}}%
{\end{proclaimr-special}}
\theoremstyle{bozont-remark-reverse}
\newtheorem{remr}[proclaim]{Remark}
\newtheorem{subremr}[equation]{Remark}
\newtheorem{notationr}[proclaim]{Notation} 
\newtheorem{assumer}[proclaim]{Assumptions} 
\newtheorem{obsr}[proclaim]{Observation} 
\newtheorem{exampler}[proclaim]{Example} 
\newtheorem{exr}[proclaim]{Exercise} 
\newtheorem{claimr}[proclaim]{Claim} 
\newtheorem{inclaimr}[equation]{Claim} 
\newtheorem{SubHeading-r}[proclaim]{\SubHeadingName}
\newtheorem{sSubHeading-r}[equation]{\sSubHeadingName}
\newtheorem{SubHeadingr}[proclaim]{\SubHeadingName}
\newtheorem{sSubHeadingr}[equation]{\sSubHeadingName}
\newenvironment{demor}[1]{\def\SubHeadingName{#1}\begin{SubHeadingr}}{\end{SubHeadingr}}
\newtheorem{definir}[proclaim]{Definition}
\theoremstyle{bozont-def-newnum-reverse}    
\newtheorem{newnumr}[proclaim]{}
\theoremstyle{bozont-def-reverse}    
\newtheorem{defnr}[proclaim]{Definition}
\newtheorem{questionr}[proclaim]{Question}
\newtheorem{newnumspecial}[proclaim]{\specialnewnumname}
\newenvironment{newnum}[1]{\def\specialnewnumname{#1}\begin{newnumspecial}}{\end{newnumspecial}}
\numberwithin{equation}{proclaim}
\numberwithin{figure}{section} 
\newcommand\equinsect{\numberwithin{equation}{section}}
\newcommand\equinthm{\numberwithin{equation}{proclaim}}
\newcommand\figinthm{\numberwithin{figure}{proclaim}}
\newcommand\figinsect{\numberwithin{figure}{section}}
\newenvironment{sequation}{%
\numberwithin{equation}{section}%
\begin{equation}%
}{%
\end{equation}%
\numberwithin{equation}{thm}%
\addtocounter{thm}{1}%
}
\newcommand{\num}{\arabic{section}.\arabic{proclaim}}
\newenvironment{pf}{\smallskip \noindent {\sc Proof. }}{\qed\smallskip}
\newenvironment{enumerate-p}{
  \begin{enumerate}}
  {\setcounter{equation}{\value{enumi}}\end{enumerate}}
\newenvironment{enumerate-cont}{
  \begin{enumerate}
    {\setcounter{enumi}{\value{equation}}}}
  {\setcounter{equation}{\value{enumi}}
  \end{enumerate}}
\let\lenumi\labelenumi
\newcommand{\rmlabels}{\renewcommand{\labelenumi}{\rm \lenumi}}
\newcommand{\rmlabelsoff}{\renewcommand{\labelenumi}{\lenumi}}
\newenvironment{heading}{\begin{center} \sc}{\end{center}}
\newcommand\subheading[1]{\smallskip\noindent{{\bf #1.}\ }}
\newlength{\swidth}
\setlength{\swidth}{\textwidth}
\addtolength{\swidth}{-,5\parindent}
\newenvironment{narrow}{
\begin{thm}
  Let $X$ be a normal variety and $\pi: Y\to X$ a resolution of singularities.  Let
  $\Sigma\subseteq X$ be a subvariety and set $E=\exc(\pi)$ and
  $\Gamma=E\cup(\pi^{-1}\Sigma)_\text{red}$.  Assume that $(X,\Sigma)$ is a Du~Bois
  pair.  Then for all $p$,
  \begin{equation*}
    \myR^{\dim X-1}\pi_*\big(\Omega_{Y}^p(\log\Gamma)(-\Gamma)\big) =0.
  \end{equation*}  
\end{thm}

  \medskip\noindent\hfill\begin{minipage}{\swidth}}
  {\end{minipage}\medskip}
\newcommand\nospace{\hskip-.45ex}
\makeatother

\title{Steenbrink vanishing extended}
\author{\me}
\date{\today}
\thanks{\mythanks}
\address{\myaddress}
\email{\myemail}
\urladdr{\myurladdr}
%
%
\subjclass[2010]{14J17}
\maketitle
\newcommand{\szabores}{Szab\'o-resolution\xspace}
\newcommand{\pairD}{\Delta}

\centerline{\it To Steven Kleiman on the occasion of his 70$^{\text{th}}$ birthday} %

\begin{abstract}
  The notion of \emph{DB index}, a measure of how far a singularity of a pair is from
  being Du~Bois, is introduced and used to generalize vanishing theorems of
  \cite{Steenbrink85}, \cite{MR2854859}, and \cite{MR2784747} with simpler and more
  natural proofs than the originals. An argument used in one of these proofs also
  yields an additional theorem connecting various push forwards that lie outside of
  the range of the validity of the above vanishing theorems.
\end{abstract}

\section{Introduction}

The importance of rational singularities has been demonstrated for decades through
various applications. Log terminal singularities (of all stripes) are rational and
this single fact has far reaching consequences in the minimal model program.
Unfortunately, not all singularities that appear in the minimal model program are
rational. In particular, the class of log canonical singularities which emerges as
the most important class in many applications, for instance in moduli theory, is not
necessarily rational. 

The class of Du~Bois singularities is an enlargement of the class of rational
singularities. Even though this notion was introduced several decades ago
\cite{Steenbrink83}, it has remained relatively obscure for a long time. It was
recently proved that log canonical singularities are Du~Bois \cite{KK10} and this
fact has started a flurry of activities and Du~Bois singularities are becoming
central in the minimal model program and related areas.

An important application of Du~Bois singularities appeared in \cite{MR2854859} and in
some other articles that grew out of it \cite{Druel-LZ,Graf-LZ}.  The way Du~Bois
singularities were used in these articles is through a vanishing theorem that can be
considered a generalization of a vanishing theorem due to Steenbrink
\cite{Steenbrink85}.

The notion of Du~Bois singularities was recently extended for pairs in
\cite{MR2784747} and the purpose of the present article is to extend the vanishing
theorem used in \cite{MR2854859} to Du~Bois pairs. In particular, the following is
proved.

\begin{thm}\label{thm:main}
  Let $X$ be a normal variety and $\pi: Y\to X$ a resolution of singularities.  Let
  $\Sigma\subseteq X$ be a subvariety and set $E=\exc(\pi)$ and
  $\Gamma=E\cup(\pi^{-1}\Sigma)_\text{red}$.  Assume that $(X,\Sigma)$ is a Du~Bois
  pair and that $\Gamma$ is an snc divisor.  Then for all $p$,
  \begin{equation*}
    \myR^{\dim X-1}\pi_*\big(\Omega_{Y}^p(\log\Gamma)(-\Gamma)\big) =0.
  \end{equation*}  
\end{thm}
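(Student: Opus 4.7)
Setting $n=\dim X$, my plan is to first identify
\begin{equation*}
R\pi_*\big(\Omega_Y^p(\log\Gamma)(-\Gamma)\big)\simeq \underline{\Omega}^p_{X,\Sigma},
\end{equation*}
the $p$-th graded piece of the filtered Deligne-Du~Bois complex of the pair $(X,\Sigma)$: this is the standard log-resolution computation of the pair's Du~Bois complex, which is applicable because $(Y,\Gamma)\to(X,\Sigma)$ is an snc log resolution, in the sense of \cite{MR2784747}. The theorem then becomes the statement $\mathcal{H}^{n-1}\underline{\Omega}^p_{X,\Sigma}=0$ for all $p$. The case $p=0$ is immediate from the Du~Bois pair hypothesis $\underline{\Omega}^0_{X,\Sigma}\simeq\mathcal{I}_{\Sigma/X}$, which is a sheaf concentrated in degree zero (the case $n\le 1$ being vacuous).

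For general $p$, I would invoke Grothendieck-Serre duality for the proper morphism $\pi$. The residue wedge pairing $\Omega_Y^p(\log\Gamma)(-\Gamma)\otimes\Omega_Y^{n-p}(\log\Gamma)\to \omega_Y$ is perfect because $\Gamma$ is snc, so $\Omega_Y^p(\log\Gamma)(-\Gamma)\cong \mathcal{H}om_Y(\Omega_Y^{n-p}(\log\Gamma),\omega_Y)$ and duality produces
\begin{equation*}
R\pi_*\big(\Omega_Y^p(\log\Gamma)(-\Gamma)\big)[n]\simeq R\mathcal{H}om_X\big(R\pi_*\Omega_Y^{n-p}(\log\Gamma),\,\omega_X^\bullet\big).
\end{equation*}
Taking $\mathcal{H}^{n-1}$ on the left converts the desired vanishing into
\begin{equation*}
R^{n-1}\pi_*\big(\Omega_Y^p(\log\Gamma)(-\Gamma)\big)\simeq\mathcal{E}xt^{-1}_X\big(R\pi_*\Omega_Y^{n-p}(\log\Gamma),\,\omega_X^\bullet\big),
\end{equation*}
so it suffices to show this $\mathcal{E}xt^{-1}$ is zero.

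The hardest step will be this last vanishing, which by local duality amounts to a depth / $S_2$-type condition on $R\pi_*\Omega_Y^{n-p}(\log\Gamma)$ at every point of $X$. My plan is to exploit the residue filtration on $\Omega_Y^{n-p}(\log\Gamma)$ whose graded pieces are $\Omega^{n-p-k}_{\Gamma^{(k)}}$ on the $k$-fold strata of $\Gamma$, reducing the depth question to analogous statements on the smooth strata of $\Gamma$ and on $\Sigma$, where the Du~Bois pair hypothesis provides the needed input via the $p=0$ identity $\underline{\Omega}^0_{X,\Sigma}\simeq\mathcal{I}_{\Sigma/X}$ and the induced Du~Bois structure on the strata. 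The snc hypothesis on $\Gamma$ is indispensable, since it is what makes the residue sequences globally exact and validates the log-resolution identification in the first step. The principal technical obstacle is propagating the Du~Bois pair condition (intrinsically a $p=0$ statement) through the residue filtration to constrain $R\pi_*\Omega_Y^{n-p}(\log\Gamma)$ for all $p$, and this is where the most delicate bookkeeping will lie.
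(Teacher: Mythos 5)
Your opening identification is the first genuine gap. The quasi-isomorphism $\myR\pi_*\big(\Omega_Y^p(\log\Gamma)(-\Gamma)\big)\simeq \Oxs{p}$ is only valid when $\pi$ is an isomorphism over $X\setminus\Sigma$, i.e., when $E\subseteq\pi^{-1}\Sigma$; it is what one extracts from the Mayer--Vietoris-type triangle $\Ox{p}\to\Os{p}\oplus\myR\pi_*\Oy{p}\to\myR\pi_*\Og{p}\xrightarrow{+1}$, whose hypotheses require $\Gamma$ to be the reduced preimage of $\Sigma$. In the theorem, $\Sigma$ is an arbitrary subvariety and $\Gamma$ contains all of $E$, part of which may lie over $Z=\overline{\pi(E)\setminus\Sigma}$; what $\myR\pi_*\big(\Omega_Y^p(\log\Gamma)(-\Gamma)\big)$ actually computes is $\Om^{\,p}_{X,\Sigma\cup Z}$, and $(X,\Sigma\cup Z)$ is not assumed to be a Du~Bois pair. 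So even your $p=0$ case does not follow from $\Oxs{0}\simeq\sI_{\Sigma\subseteq X}$: one must control the discrepancy $\Om^{\,0}_{\Sigma\cup Z,\Sigma}$, which the paper does by introducing the DB index and using $\dim Z\leq n-2$ (normality of $X$) to push its cohomology into degrees $\leq n-3$. That is the entire content of the paper's \S4, and it is absent from your plan.

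The second and more serious gap is the case $p=1$, $q=n-1$. For $p\geq 2$ the vanishing needs no Du~Bois hypothesis at all, since $p+q>n$ puts it in the range of Steenbrink vanishing; the borderline case $p+q=n$ with $p=1$ is precisely where Steenbrink's theorem --- and his duality method, which is essentially what you propose --- is silent. Your reduction via Grothendieck duality to $\sExt^{-1}_X\big(\myR\pi_*\Omega_Y^{n-1}(\log\Gamma),\omega_X^{\bdot}\big)=0$ is formally correct, but you then defer the actual proof to ``delicate bookkeeping'' with the residue filtration, and you give no mechanism by which the Du~Bois pair condition --- intrinsically a statement about $\Om^{\,0}_{X,\Sigma}$ --- would ever enter that depth computation; yet it must enter somewhere, since the conclusion fails without a hypothesis of this kind. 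The paper's route for $p=1$ is entirely different and avoids duality: pushing forward the filtered quasi-isomorphism $\Oyg{\bdot}\qis j_!\bC_{Y\setminus\Gamma}$ gives a spectral sequence $E_1^{p,q}=\myR^q\pi_*\Oyg{p}$ abutting to $\myR^{p+q}\pi_*\big(j_!\bC_{Y\setminus\Gamma}\big)$, which vanishes for $p+q>0$ by topological vanishing; every differential into or out of $E_r^{1,n-1}$ has vanishing source or target (by the $p=0$ result and by generalized Steenbrink vanishing), so $E_1^{1,n-1}$ must itself be zero. The Du~Bois hypothesis enters exactly through $E_1^{0,n-1}=0$. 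This transfer of information from $p=0$ to $p=1$ is the key idea of the proof, and it is the step your proposal is missing.
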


In fact, a stronger version will be proved in Corollary~\ref{cor:gkkp-van}, but
stating the stronger form requires some preparation, done in \S\ref{sec:db-index}.
Theorem~\ref{thm:main} is a generalization of \cite[Thm.~14.1]{MR2854859}.  As noted
in \cite{MR2854859} it follows from Steenbrink's vanishing theorem
\cite[Thm.~2(b)]{Steenbrink85} in the cases $p>1$.  In
\S\ref{sec:gener-steenbr-vanish} a generalization,
Theorem~\ref{thm:gener-steenbr-vanish}, of Steenbrink's vanishing is proved after
reviewing the definition and basic properties of the Deligne-Du~Bois complex in
\S\ref{sec:deligne-du-bois}.  Arguably the proof presented here is much simpler than
Steenbrink's original proof and perhaps more importantly, at least in the opinion of
the author, more natural and makes it more clear why the statement is true.  The same
is true for the $p=1$ case of Theorem~\ref{thm:main} which is proved in
\S\ref{sec:vanishing-p=1}. The final section, \S\ref{sec:an-accidental-exact},
contains a theorem that could be considered a byproduct of the proof in
\S\ref{sec:vanishing-p=1}.

\begin{demo}{Definitions and Notation}\label{demo:defs-and-not}
  Unless otherwise stated, all objects are assumed to be defined over $\bC$, all
  schemes are assumed to be of finite type over $\bC$ and a morphism means a morphism
  between schemes of finite type over $\bC$.

  If $\phi:Y\to Z$ is a birational morphism, then $\exc(\phi)$ will denote the
  \emph{exceptional set} of $\phi$. For a closed subscheme $W\subseteq X$, the ideal
  sheaf of $W$ is denoted by $\sI_{W\subseteq X}$ or if no confusion is likely, then
  simply by $\sI_W$.  For a point $x\in X$, $\kappa(x)$ denotes the residue field of
  $\sO_{X,x}$.


  The right derived functor of an additive functor $F$, if it exists, is denoted by
  $\myR F$ and $\myR^iF$ is short for $h^i\circ \myR F$, where $h^i$ is the
  $i^\text{th}$ cohomology sheaf of a complex. 


\end{demo}

\begin{ack}
  The author is grateful to Daniel Greb and Stefan Kebekus for useful discussions
  that helped clarifying the author's thoughts about the topics studied in this
  article and to the referee for the repeated efforts in finding several typos in
  Chapter 6.
\end{ack}

\section{The Deligne-Du~Bois complex}\label{sec:deligne-du-bois}

The {Deligne-Du~Bois} complex is a generalization of the de~Rham complex to singular
varieties.  It is a complex of sheaves on $X$ that is quasi-isomorphic to the
constant sheaf $\bC_X$. The terms of this complex are harder to describe but its
properties, especially cohomological properties are very similar to the de~Rham
complex of smooth varieties. In fact, for a smooth variety the {Deligne-Du~Bois}
complex is quasi-isomorphic to the de~Rham complex, so it is indeed a direct
generalization.

The original construction of this complex, $\Ox\bdot$, is based on simplicial
resolutions. The reader interested in the details is referred to the original article
\cite{DuBois81}.  Note also that a simplified construction was later obtained in
\cite{Carlson85} and \cite{GNPP88} via the general theory of polyhedral and cubic
resolutions.  An easily accessible introduction can be found in \cite{Steenbrink85}.
Other useful references are the recent book \cite{PetersSteenbrinkBook} and the
survey \cite{Kovacs-Schwede09}. We will actually not use these resolutions here. They
are needed for the construction, but if one is willing to believe the listed
properties (which follow in a rather straightforward way from the construction) then
one should be able follow the material presented here.  The interested reader should
note that recently Schwede found a simpler alternative construction of (part of) the
{Deligne-Du~Bois} complex that does not need a simplicial resolution
\cite{MR2339829}.  For applications of the {Deligne-Du~Bois} complex and {Du~Bois}
singularities other than the ones listed here see \cite{Steenbrink83}, \cite[Chapter
12]{Kollar95s}, \cite{Kovacs99,Kovacs00c,KSS10,KK10}.

As mentioned in the introduction, the Deligne-Du~Bois theory was recently extended
for pairs in \cite{MR2784747}. In particular, we will be using the Deligne-Du~Bois
complex of a pair. I will not repeat the construction or all the basic properties of
this complex. The interested reader may consult the original article \cite{MR2784747}
or the more recent and more detailed account in \cite[\S6]{SingBook}.
In particular, the most important properties of the Deligne-Du~Bois complex of a
pair are listed in \cite[Theorem~6.5]{SingBook}. I will only recall the ones that are
used here. 

\begin{defini}
  A \emph{reduced pair} consists of $X$ a reduced scheme of finite type over $\bC$
  and $\Sigma\subseteq X$ a reduced closed subscheme of $X$.  The Deligne-Du~Bois
  complex of the reduced pair $(X,\Sigma)$ is denoted by $\Oxs\bdot$. If
  $\Sigma=\emptyset$, then this reduces to the Deligne-Du~Bois complex of $X$ and
  will be denoted by $\Ox\bdot$.

  The shifted associated graded quotients of $\Oxs\bdot$ are denoted and defined by
  the following formula:
  $$\quad
  \Oxs p:= Gr^p_{\text{filt}}\Ox\bdot[p].
  $$
\end{defini}

\medskip

\begin{demo}{Basic Properties}
  The Deligne-Du~Bois complex is a resolution of the sheaf
  $j_!\bC_{X\setminus\Sigma}$, the constant sheaf $\bC$ on $X\setminus \Sigma$
  extended by $0$ to the entire $X$ \cite[Thm.~4.1]{MR2784747},
  \cite[Thm.~6.5(1)]{SingBook}:
  \begin{equation}
    \label{eq:12}
    j_!\bC_{X\setminus\Sigma}\qis \Oxs\bdot 
  \end{equation}
  Here, of course, $j:X\setminus \Sigma\into X$ is the natural inclusion map.

  If $X$ is smooth and $\Sigma$ is an snc divisor on $X$, then the shifted associated
  graded quotients have only one non-zero cohomology sheaf, $h^0$, and that one is
  given by logarithmic differentials vanishing along $\Sigma$
  \cite[(3.10)]{MR2784747}, \cite[Thm.~6.5(3)]{SingBook}, that is, if $(X,\Sigma)$ is
  an snc pair, then
  \begin{equation}
    \label{eq:4}
    \Oxs p \qis \Omega_X^p(\log \Sigma)(-\Sigma) \simeq \Omega_X^p(\log \Sigma)\otimes
    \sI_{\Sigma\subseteq X}
  \end{equation}

  The following two distinguished triangles will be important in the sequel. 

  The first one connects the Deligne-Du~Bois complex of a pair to that of the members
  of the pair. See \cite{MR2784747} or \cite[Thm.~6.5(7)]{SingBook}: For any $p\in
  \bN$ there exists a distinguished triangle,
  \begin{equation}
    \label{eq:1}
    \xymatrix{%
      & \Oxs p \ar[r] & \Ox p \ar[r] & \Os p \ar[r]^-{+1} &.
    }
  \end{equation}

  The second one relates the Deligne-Du~Bois complexes of pairs connected via a
  birational morphism.  See \cite{DuBois81} or \cite[Thm.~6.5(10)]{SingBook}: Let
  $\pi: Y\to X$ be a projective morphism and $\Sigma\subseteq X$ a reduced closed
  subscheme such that $\pi$ is an isomorphism over $X\setminus\Sigma$. Set
  $\Gamma=(\pi^{-1}\Sigma)_\text{red}$. Then for any $p\in\bN$ there exists a
  distinguished triangle,
  \begin{equation}
    \label{eq:2}
    \xymatrix{ 
      & \Ox p \ar[r] & \Os p \oplus \myR\pi_* \Oy p \ar[r] & \myR\pi_*\Og p
      \ar[r]^-{+1} &.\quad 
    }
  \end{equation}

  One more property of the Deligne-Du~Bois complex will be very useful. This may be
  considered a Grauert-Riemenschneider-type vanishing theorem in this setting. See
  \cite[V.6.2]{GNPP88}: For any $X$ a reduced scheme of finite type over $\bC$,
  \begin{equation}
    \label{eq:3}\quad
    h^q(\Ox p) = 0, \text{ for } p+q>\dim X
  \end{equation}

\end{demo}

\section{Generalized Steenbrink vanishing}\label{sec:gener-steenbr-vanish}

The following theorem extends Steenbrink's vanishing theorem to a more general
situation.

\begin{thm}\label{thm:gener-steenbr-vanish}
  Let $\pi: Y\to X$ be a projective morphism of reduced schemes of finite type over
  $\bC$ and $\Sigma\subseteq X$ a reduced closed subscheme such that $\pi$ is an
  isomorphism over $X\setminus\Sigma$. Assume that $\Sigma$ does not contain any
  irreducible components of $X$.  Set $\Gamma=(\pi^{-1}\Sigma)_\text{red}$.  Then for
  any $x\in X$,
  $$(\myR^q\pi_*\Oyg p)_x=0, \text{ for } p+q > \dim_x X.$$
  In particular,
  $$\myR^q\pi_*\Oyg p =0, \text{ for } p+q > \dim X.$$
\end{thm}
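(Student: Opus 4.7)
The plan is to identify $\myR\pi_*\Oyg p$ with the pair Deligne--Du~Bois complex $\Oxs p$ downstairs on $X$, and then derive the desired vanishing from the Grauert--Riemenschneider type statement~\eqref{eq:3}.

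For the identification, I would observe that the natural commutative square
\[
  \begin{array}{ccc}
    \Ox p & \longrightarrow & \myR\pi_*\Oy p \\
    \downarrow & & \downarrow \\
    \Os p & \longrightarrow & \myR\pi_*\Og p
  \end{array}
\]
is homotopy bicartesian in the derived category: this is precisely the content of~\eqref{eq:2}. Consequently the fibers of its two vertical arrows agree. But the fiber of the left arrow is $\Oxs p$ by~\eqref{eq:1} applied to $(X,\Sigma)$, and the fiber of the right arrow is $\myR\pi_*\Oyg p$ by applying $\myR\pi_*$ to~\eqref{eq:1} for $(Y,\Gamma)$. This produces a natural quasi-isomorphism $\myR\pi_*\Oyg p \qis \Oxs p$. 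Invoking cleanly the derived-categorical fact that in a Mayer--Vietoris triangle the two pairs of parallel edges of the underlying square have equivalent fibers is what I expect to be the technical heart of the argument.

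With that identification in hand, it suffices to show that $h^q(\Oxs p)_x = 0$ whenever $p+q>\dim_x X$. For $x\notin\Sigma$ this is immediate, since then $\Os p$ vanishes near $x$, so $\Oxs p\qis\Ox p$ in a neighborhood of $x$ and~\eqref{eq:3} applies. For $x\in\Sigma$, the long exact sequence of cohomology sheaves associated to~\eqref{eq:1} for $(X,\Sigma)$ yields
\[
  h^{q-1}(\Os p)_x \longrightarrow h^q(\Oxs p)_x \longrightarrow h^q(\Ox p)_x.
\]
Localizing to a neighborhood of $x$ of pure dimension $\dim_x X$ and applying~\eqref{eq:3} on $X$ kills the right term. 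The hypothesis that $\Sigma$ contains no irreducible component of $X$ forces $\dim_x\Sigma<\dim_x X$, so $p+(q-1)\geq\dim_x X>\dim_x\Sigma$ and~\eqref{eq:3} applied to $\Sigma$ kills the left term. Hence $h^q(\Oxs p)_x=0$, and the global version of the statement is an immediate consequence.
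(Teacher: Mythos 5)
Your argument is correct, and while it runs on exactly the same three inputs \eqref{eq:1}, \eqref{eq:2}, and \eqref{eq:3} as the paper, it organizes them differently. The paper never writes down the quasi-isomorphism $\myR\pi_*\Oyg p \qis \Oxs p$: it works entirely at the level of long exact sequences of cohomology sheaves, first using \eqref{eq:2} together with \eqref{eq:3} to show that $\myR^q\pi_*\Oy p \to \myR^q \pi_*\Og p$ is an isomorphism for $p+q>\dim_x X$ and surjective for $p+q=\dim_x X$ (this is where the hypothesis that $\Sigma$ contains no component of $X$ enters, exactly as in your second step), and then reads off the vanishing from the sequence obtained by applying $\myR\pi_*$ to \eqref{eq:1} on $Y$. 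You instead promote \eqref{eq:2} to a homotopy bicartesian square, identify $\myR\pi_*\Oyg p$ with $\Oxs p$ by comparing fibers of parallel edges, and then prove the vanishing for $\Oxs p$ downstairs. The fiber-comparison lemma you flag as the technical heart is a genuine but standard consequence of the octahedral axiom; the resulting isomorphism need not be canonical, but any isomorphism suffices for a vanishing statement, so there is no gap. What your route buys is a stronger and more conceptual intermediate fact---the theorem becomes the assertion $h^q(\Oxs p)_x=0$ for $p+q>\dim_x X$ about the pair complex on $X$ alone, independent of the choice of $\pi$---at the cost of triangulated-category bookkeeping that the paper's stalk-by-stalk argument avoids. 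One small imprecision: you cannot always shrink to a neighborhood of $x$ of \emph{pure} dimension $\dim_x X$ (components of smaller dimension may pass through $x$); but all you need is a neighborhood $U$ with $\dim U=\dim_x X$, which always exists and is what the paper uses.
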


\begin{proof}
  Let $x\in X$ and $n:=\dim_x X$.  The statement is local on $X$, so we may restrict
  to a neighborhood of $x$ and assume that $\dim X=\dim_x X$.

  Consider the long exact sequence of cohomology sheaves induced by the distinguished
  triangle (\ref{eq:2}):
  $$
  \cdots \to h^q(\Ox p) \to h^q(\Os p) \oplus \myR^q\pi_*\Oy p \to \myR^q \pi_*\Og p
  \to h^{q+1}(\Ox p) \to \cdots .
  $$
  By (\ref{eq:3}) the morphism 
  \begin{equation}
    \label{eq:5}
    \myR^q\pi_*\Oy p \to \myR^q \pi_*\Og p
  \end{equation}
  is an isomorphism for $p+q>n$ and a surjection for $p+q=n$. Notice that this is the
  place where we use the assumption that $\Sigma$ does not contain any irreducible
  components of $X$: Indeed that implies that then $\dim_x\Sigma<n$ and hence for
  $p+q=n$, we still have that $h^q(\Os p)=0$. Without this we could only conclude
  that $h^q(\Os p) \oplus \myR^q\pi_*\Oy p \to \myR^q \pi_*\Og p$ is surjective, but
  this is not sufficient in the next step.

  Next consider the long exact sequence induced by applying the functor $\myR\pi_*$
  to the distinguished triangle from (\ref{eq:1}) on $Y$:
  $$
  \cdots\!\to\!\myR^{q-1}\!\pi_*\Oy p \to \!\myR^{q-1}\!\pi_*\Og p \!\to
  \myR^q\pi_*\Oyg p \to \myR^q\pi_*\Oy p \to \myR^q \pi_*\Og p \to\!\cdots.
  $$
  Now the desired statement follows from (\ref{eq:5}).
\end{proof}

This way we obtained a very simple proof of Steenbrink's vanishing theorem:

\begin{thm}\label{thm:steenbr-vanish}
  \cite[Thm.~2(b)]{Steenbrink85} Let $X$ be a complex variety, $\Sigma\subseteq X$
  such that $X\setminus \Sigma$ is smooth and $\pi:Y\to X$ a proper birational
  morphism such that $Y$ is smooth, $E=\pi^{-1}\Sigma$ is an snc divisor on $Y$ and
  $\pi$ induces an isomorphism between $Y\setminus E$ and $X\setminus \Sigma$.  Then
  $$
  \myR^q\pi_* \Omega_Y^p(\log E)(-E) = 0, \text{ for } p+q>\dim X.
  $$
\end{thm}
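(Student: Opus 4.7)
The proof will be an essentially immediate application of Theorem~\ref{thm:gener-steenbr-vanish}. The idea is that under Steenbrink's snc hypotheses the Deligne--Du~Bois complex $\Oyg p$ on the smooth side reduces, via the snc computation \eqref{eq:4}, to the classical logarithmic sheaf $\Omega_Y^p(\log E)(-E)$, so the abstract vanishing becomes the classical one.

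First, I would verify that Theorem~\ref{thm:gener-steenbr-vanish} applies to $\pi\colon Y\to X$ and $\Sigma$. The morphism is proper, birational, and restricts to an isomorphism $Y\setminus E \xrightarrow{\sim} X\setminus\Sigma$. Since $Y$ is smooth and $E\subseteq Y$ is a (proper) divisor, $Y\setminus E$ is dense in $Y$, hence $X\setminus\Sigma$ is dense in $X$, so $\Sigma$ contains no irreducible component of $X$. (If one wants to be fastidious about proper versus projective, the statement is local on $X$, so one can replace $\pi$ by a projective resolution dominating $Y$ via Chow's lemma; this is harmless since the question concerns only the higher direct images of a sheaf on $Y$.)

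Second, I would identify $\Oyg p$ with the classical logarithmic sheaf. With $\Gamma=(\pi^{-1}\Sigma)_{\mathrm{red}}=E$ (the snc divisor $E$ being already reduced), the pair $(Y,E)$ is an snc pair, so by \eqref{eq:4}
$$\Oyg p \qis \Omega_Y^p(\log E)(-E).$$
Substituting this into the conclusion of Theorem~\ref{thm:gener-steenbr-vanish} yields
$$\myR^q\pi_*\Omega_Y^p(\log E)(-E)\;\iso\;\myR^q\pi_*\Oyg p\;=\;0\qquad\text{for } p+q>\dim X,$$
which is exactly the claim.

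No serious obstacle is expected; all the real content lies in Theorem~\ref{thm:gener-steenbr-vanish} together with the snc identification \eqref{eq:4}. The only mildly delicate points are checking that $\Sigma$ avoids the irreducible components of $X$ and, if desired, reconciling the proper versus projective hypothesis, neither of which is substantive in the presence of Chow's lemma.
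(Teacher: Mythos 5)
Your proposal is correct and follows exactly the paper's own (one-line) proof: apply Theorem~\ref{thm:gener-steenbr-vanish} and identify $\Oyg p$ with $\Omega_Y^p(\log E)(-E)$ via the snc computation \eqref{eq:4}. The extra verifications you supply (that $\Sigma$ contains no component of $X$, and the proper-versus-projective point) are sensible bookkeeping the paper leaves implicit.
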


\begin{proof}
  By (\ref{eq:4}) this is a direct consequence of \eqref{thm:gener-steenbr-vanish}.
\end{proof}

\section{DB index and more vanishing}\label{sec:db-index}

First we will extend the notion of Du~Bois pairs as follows.

\begin{defini}
  Let $(X,\Sigma)$ be a reduced pair and $x\in X$ a point. The \emph{local DB index
    of $(X,\Sigma)$ at $x$}, $\db_x(X,\Sigma)$, is the smallest natural number above
  which the cohomology sheaves of the $0^\text{th}$ shifted associated graded complex
  of the Deligne-Du~Bois complex of $(X,\Sigma)$ vanish at $x$. In other words,
  \begin{equation*}
    \db_x(X,\Sigma)= \min \{ q\in \bN \ \vert \  h^i(\Oxs 0)_x=0 \text{ for } i> q
    \}. 
  \end{equation*}
  Notice that $\db_x(X\Sigma)$ is an upper semicontinuos function of $x$.

  The \emph{DB index of $(X,\Sigma)$} is the maximum of the local DB indices of
  $(X,\Sigma)$ at $x$ for all $x\in X$, that is,
  \begin{equation*}
    \db(X,\Sigma)= \max \{ \db_x(X,\Sigma) \ \vert \ x\in X    \}. 
  \end{equation*}

  Observe that if $(X,\Sigma)$ is a Du~Bois pair, then $\db(X,\Sigma)=0$, but the
  converse is not true. For instance, any reduced curve $C$ has $\db(C,\emptyset)=0$,
  but $C$ is Du~Bois if and only if it is seminormal.
\end{defini}

\begin{claim}(cf.\ \cite[6.7]{SingBook})
  \label{claim:db-index-bound}
  Let $(X,\Sigma)$ be a reduced pair and $x\in X$ a point.  If $\Sigma$ does not
  contain any irreducible component of $X$, then
  $$ \db_x(X,\Sigma)\leq \dim_x X-1. $$
\end{claim}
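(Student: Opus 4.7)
The plan is to argue by induction on $n := \dim_x X$, combining the long exact sequence of cohomology sheaves of \eqref{eq:1} with a resolution-based analysis via \eqref{eq:2} and \eqref{eq:3}. I would work locally at $x$, assuming $\dim X = n$ and, by hypothesis, $\dim\Sigma \leq n - 1$. The long exact sequence from \eqref{eq:1} at the stalk at $x$,
\[
\cdots \to h^{i-1}(\Os 0)_x \to h^i(\Oxs 0)_x \to h^i(\Ox 0)_x \to h^i(\Os 0)_x \to \cdots,
\]
together with \eqref{eq:3} applied to both $X$ and $\Sigma$, will immediately yield $h^i(\Oxs 0)_x = 0$ for $i > n$. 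In the boundary case $i = n$ we have $h^n(\Os 0)_x = 0$ by \eqref{eq:3}, so it suffices to prove both (A) $h^n(\Ox 0)_x = 0$ and (B) $h^{n-1}(\Os 0)_x = 0$.

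For (A), I would choose a resolution $\pi \colon Y \to X$ with $Y$ smooth, $\pi$ an isomorphism over $X \setminus X_{\mathrm{sing}}$, and $\tilde\Gamma := \pi^{-1}(X_{\mathrm{sing}})_{\mathrm{red}}$ an snc divisor, and apply triangle \eqref{eq:2} with the role of $\Sigma$ played by $X_{\mathrm{sing}}$. The key inputs are: \eqref{eq:3} applied to $X_{\mathrm{sing}}$ gives $h^n(\underline{\Omega}_{X_{\mathrm{sing}}}^{\,0})_x = 0$ (since $\dim_x X_{\mathrm{sing}} < n$); the fiber-dimension bound $\dim\pi \leq n - 1$ forces $\myR^n\pi_*\mathcal{F} = 0$ for every coherent $\mathcal{F}$ on $Y$; and the short exact sequence $0 \to \sO_Y(-\tilde\Gamma) \to \sO_Y \to \sO_{\tilde\Gamma} \to 0$, combined with $\myR^n\pi_*\sO_Y(-\tilde\Gamma) = 0$, yields the surjection $\myR^{n-1}\pi_*\sO_Y \twoheadrightarrow \myR^{n-1}\pi_*\sO_{\tilde\Gamma}$ at $x$. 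The long exact sequence from \eqref{eq:2} in degree $n$ then forces the connecting homomorphism $\myR^{n-1}\pi_*\sO_{\tilde\Gamma} \to h^n(\Ox 0)_x$ to vanish, establishing $h^n(\Ox 0)_x = 0$.

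For (B), if $\dim_x\Sigma \leq n - 2$ then \eqref{eq:3} applied to $\Sigma$ directly gives $h^{n-1}(\Os 0)_x = 0$; if $\dim_x\Sigma = n - 1$ (so $n \geq 2$), I would invoke the induction hypothesis applied to $(\Sigma, \emptyset)$, yielding $\db_x(\Sigma) \leq n - 2$ and hence $h^{n-1}(\Os 0)_x = 0$. The base case $n = 1$ needs a separate direct argument, since (B) can genuinely fail: instead, I would show that $\mathrm{coker}(h^0(\Ox 0)_x \to h^0(\Os 0)_x)$ is zero by identifying the map with the surjection $\sO_{X^{\mathrm{sn}}, x} \twoheadrightarrow \sO_{\Sigma, x}$ induced by $\Sigma \hookrightarrow X$ (using that $0$-dimensional reduced schemes are seminormal), from which $h^1(\Oxs 0)_x = 0$ follows by the long exact sequence. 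The main obstacle will be (A), the resolution argument for $h^n(\Ox 0)_x = 0$, which requires the delicate combination of \eqref{eq:2}, \eqref{eq:3}, and the fiber-dimension vanishing at the borderline degree $n$.
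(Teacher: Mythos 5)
Your proof is correct, but it is much longer than the paper's because you reprove the one nontrivial input that the paper simply cites. The paper's entire argument is the three-term exact sequence $h^{q-1}(\Os 0)\to h^q(\Oxs 0)\to h^q(\Ox 0)$ coming from \eqref{eq:1}, together with the fact, quoted from \cite[6.6]{SingBook}, that $h^q(\underline{\Omega}_W^{\,0})=0$ for all $q\geq\dim W$ when $W$ is reduced of positive dimension; applying this to $W=X$ and $W=\Sigma$ kills both outer terms for $q\geq\dim_x X$ (the hypothesis on $\Sigma$ giving $\dim_x\Sigma<\dim_x X$), and the claim follows. Your reduction to the two boundary vanishings (A) $h^n(\Ox 0)_x=0$ and (B) $h^{n-1}(\Os 0)_x=0$ is exactly this, and your resolution argument for (A) --- triangle \eqref{eq:2} taken over $X_{\mathrm{sing}}$, the fiber-dimension bound $\myR^n\pi_*=0$, and the surjectivity of $\myR^{n-1}\pi_*\sO_Y\to\myR^{n-1}\pi_*\sO_{\tilde\Gamma}$ extracted from $\myR^n\pi_*\sI_{\tilde\Gamma}=0$ --- is essentially the standard proof of the cited fact, so nothing is wrong with it; it simply makes the argument self-contained where the paper chose to outsource. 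What this buys you is independence from the reference at the cost of invoking resolution of singularities explicitly; what the paper's version buys is brevity. Two small remarks: your step (B) could equally be handled by citing (or by your own step (A) applied to) $\Sigma$ directly rather than by induction on dimension, which would also eliminate the separate $n=1$ base case; and your seminormalization argument in that base case is correct, though note that both your argument and the paper's implicitly assume $\dim_x X\geq 1$, since for an isolated reduced point $h^0(\Ox 0)_x\neq 0$ and the asserted bound fails degenerately.
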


\begin{proof}
  Consider the long exact sequence of cohomology sheaves induced by the distinguished
  triangle (\ref{eq:1}):
  \begin{equation}
    \label{eq:9}
    h^{q-1}(\Os 0)\to h ^q(\Oxs 0) \to h^q(\Ox 0)
  \end{equation}

  The assumption implies that $\dim_x\Sigma < \dim_x X$ and then the fact (cf.\
  \cite[6.6]{SingBook}) that $h^q(\Ox 0)=0$ for all $q\geq \dim X$ imply that the
  left and right hand side of (\ref{eq:9}) are zero for $q\geq \dim_x X$ and hence so
  is the middle.
\end{proof}

Using the DB index we generalize the vanishing theorem \cite[6.1]{MR2784747}.

\begin{thm}\label{thm:du-bois-pairs}
  Let $\pi: Y\to X$ be a projective birational morphism of reduced schemes of finite
  type over $\bC$ and $\Sigma\subseteq X$ a reduced closed subscheme such that
  $\Sigma$ does not contain any irreducible components of $X$.  Set $E=\exc(\pi)$,
  $\Gamma=E\cup(\pi^{-1}\Sigma)_\text{red}$, and $Z=\overline{\pi(E)\setminus
    \Sigma}\subseteq X$.  Then for any $x\in X$,
  $$
  (\myR^q\pi_*\Oyg 0)_x=0, \text{ for } q > \max \{ \db_x(X,\Sigma), \db_x(Z,Z\cap
  \Sigma) +1 \}.
  $$ 
  In particular,
  $$
  \myR^q\pi_*\Oyg 0 =0, \text{ for } q > \max \{ \db(X,\Sigma), \db(Z,Z\cap \Sigma)
  +1 \}.
  $$
\end{thm}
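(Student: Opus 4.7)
The plan is to identify $\myR\pi_*\Oyg 0$ with a Deligne--Du~Bois complex of a pair on $X$ via a Mayer--Vietoris argument, and then to bound the stalks of its cohomology sheaves using an excision-type distinguished triangle. Set $\Sigma':=\Sigma\cup Z$. Since $\pi$ is an isomorphism exactly over $X\setminus\pi(E)$ and $\pi(E)\setminus\Sigma\subseteq Z$, the morphism $\pi$ is an isomorphism over $X\setminus\Sigma'$; moreover $\pi^{-1}(\pi(E))=E$ set-theoretically, so $(\pi^{-1}\Sigma')_\text{red}=E\cup(\pi^{-1}\Sigma)_\text{red}=\Gamma$. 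These are precisely the hypotheses of \eqref{eq:2} with $\Sigma$ replaced by $\Sigma'$, yielding the distinguished triangle
\begin{equation*}
  \Ox 0\to \underline{\Omega}_{\Sigma'}^{\,0}\oplus\myR\pi_*\Oy 0\to \myR\pi_*\Og 0\to +1.
\end{equation*}

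Interpreted as a Mayer--Vietoris, this triangle says that the commutative square with vertices $\Ox 0$, $\underline{\Omega}_{\Sigma'}^{\,0}$, $\myR\pi_*\Oy 0$, $\myR\pi_*\Og 0$ is a homotopy pushout, so the cones of its vertical arrows are canonically isomorphic. By \eqref{eq:1} applied to the pair $(X,\Sigma')$ the cone of $\Ox 0\to\underline{\Omega}_{\Sigma'}^{\,0}$ is $\underline{\Omega}_{X,\Sigma'}^{\,0}[1]$, and by applying $\myR\pi_*$ to \eqref{eq:1} for the pair $(Y,\Gamma)$ the cone of $\myR\pi_*\Oy 0\to\myR\pi_*\Og 0$ is $\myR\pi_*\Oyg 0[1]$. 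One therefore obtains the key identification
\begin{equation*}
  \underline{\Omega}_{X,\Sigma'}^{\,0}\simeq \myR\pi_*\Oyg 0,
\end{equation*}
which reduces the problem to bounding the stalks $h^q(\underline{\Omega}_{X,\Sigma'}^{\,0})_x$.

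For the latter I would compare the Du~Bois complexes of the pairs $(X,\Sigma)$ and $(X,\Sigma')$ via an excision triangle. With $U=X\setminus\Sigma$, $U'=X\setminus\Sigma'$ and $k\colon\Sigma'\setminus\Sigma\into X$, the equality $\Sigma'\setminus\Sigma=Z\setminus(Z\cap\Sigma)$ and the short exact sequence $0\to j_{U'!}\bC_{U'}\to j_{U!}\bC_U\to k_!\bC_{Z\setminus(Z\cap\Sigma)}\to 0$ together with \eqref{eq:12} yield the distinguished triangle
\begin{equation*}
  \underline{\Omega}_{X,\Sigma'}^{\,0}\to \Oxs 0\to \underline{\Omega}_{Z,Z\cap\Sigma}^{\,0}\to +1,
\end{equation*}
the last term being the extension by zero from $Z$ to $X$. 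Taking cohomology sheaves at $x$ and invoking the defining property of the local DB index gives $h^q(\Oxs 0)_x=0$ for $q>\db_x(X,\Sigma)$ and $h^{q-1}(\underline{\Omega}_{Z,Z\cap\Sigma}^{\,0})_x=0$ for $q-1>\db_x(Z,Z\cap\Sigma)$; hence $h^q(\underline{\Omega}_{X,\Sigma'}^{\,0})_x=0$ for $q>\max\{\db_x(X,\Sigma),\db_x(Z,Z\cap\Sigma)+1\}$, and through the Mayer--Vietoris identification this is exactly the asserted vanishing. The main obstacle is the Mayer--Vietoris identification itself: arranging the pushout square and matching its arrows with those in \eqref{eq:1} requires some derived-category care, but once this is in place the remainder is formal bookkeeping with long exact sequences.
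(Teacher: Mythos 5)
Your proposal is correct in substance and, despite its derived--category packaging, runs on exactly the same ingredients as the paper's proof: the auxiliary subscheme $\Sigma'=\Sigma\cup Z$ (called $\wt\Sigma$ in the paper), the two distinguished triangles \eqref{eq:1} and \eqref{eq:2} applied to $(X,\Sigma')$ and $(Y,\Gamma)$, and the excision property of Du~Bois complexes of pairs. The difference is organizational. The paper never identifies $\myR\pi_*\Oyg 0$ with anything on $X$; it chases three long exact sequences of cohomology sheaves, recording in each degree range whether a map is an isomorphism or merely a surjection, and for this it only needs the \emph{existence} of the triangle \eqref{eq:2}. You instead read \eqref{eq:2} as a homotopy-cartesian square, conclude $\myR\pi_*\Oyg 0\simeq\Om_{X,\Sigma'}^{\,0}$ by comparing cones of parallel arrows, and finish with a single long exact sequence. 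This is cleaner and proves something strictly stronger (the identification itself), but it uses more than the paper quotes: you must know that the two maps in \eqref{eq:2} are the natural ones, so that the square is a genuine homotopy pushout. That is true by the construction in \cite{DuBois81} and \cite[\S 6]{SingBook}, so the step is legitimate, but it should be cited rather than asserted.

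One justification in your write-up is genuinely off, although the triangle it is meant to support is correct. The triangle $\Om_{X,\Sigma'}^{\,0}\to\Oxs 0\to\Om_{Z,Z\cap\Sigma}^{\,0}\stackrel{+1}{\longrightarrow}$ does \emph{not} follow from \eqref{eq:12} together with the short exact sequence of extended-by-zero constant sheaves: \eqref{eq:12} concerns the full filtered complex $\Oxs\bdot$, and a quasi-isomorphism of total complexes carries no information about the individual graded quotients such as $\Oxs 0$. The correct route is the one the paper takes in disguise: apply the octahedral axiom to the composition $\Ox 0\to\Om_{\Sigma'}^{\,0}\to\Os 0$ together with the three instances of \eqref{eq:1} for $(X,\Sigma)$, $(X,\Sigma')$ and $(\Sigma',\Sigma)$ to obtain $\Om_{X,\Sigma'}^{\,0}\to\Oxs 0\to\Om_{\Sigma',\Sigma}^{\,0}\stackrel{+1}{\longrightarrow}$, and then invoke the excision isomorphism $\Om_{\Sigma',\Sigma}^{\,0}\simeq\Om_{Z,Z\cap\Sigma}^{\,0}$, which is precisely the content of \cite[3.18--3.19]{MR2784747} that the paper cites (in the weaker form $\db_x(\Sigma',\Sigma)=\db_x(Z,Z\cap\Sigma)$). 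With these two repairs the bookkeeping at the end is exactly right and yields the same bound $q>\max\{\db_x(X,\Sigma),\db_x(Z,Z\cap\Sigma)+1\}$ as the paper.
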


\begin{proof}
  Let $x\in X$.  The statement is local on $X$ and $\db_x$ is upper semicontinuous as
  a function of $x$, so we may restrict to a neighborhood of $x$ and assume that
  $\db(X,\Sigma)=\db_x(X,\Sigma)$ and $\db(Z,Z\cap\Sigma)=\db_x(Z,Z\cap \Sigma)$.

  Using the long exact sequence of cohomology sheaves associated to the distinguished
  triangle of (\ref{eq:1}) shows that the morphism
  \begin{equation}
    \label{eq:6}
    \alpha^q: h^q(\Ox 0)\to h^q(\Os 0)
  \end{equation}
  is an isomorphism for $q>\db(X,\Sigma)$. 
  and a surjection for $q=\db(X,\Sigma)$.

  Next let ${\wt\Sigma}=Z\cup \Sigma$. Then \cite[3.19]{MR2784747} implies that for
  any $x\in X$
  \begin{equation*}
    \db_x({\wt\Sigma},\Sigma)=\db_x(Z,Z\cap\Sigma), 
  \end{equation*}
  and hence that $h^q(\Om_{{\wt\Sigma},\Sigma}^0)=0$ for $q>\db(Z,Z\cap\Sigma)$. The
  same way as above this implies that the morphism
  \begin{equation}
    \label{eq:7}
    \beta^q: h^q(\Om_{\wt\Sigma}^0)\to h^q(\Os 0)
  \end{equation}
  is an isomorphism for $q>\db(Z,Z\cap\Sigma)$.

  Now observe that the natural morphism $\Ox 0 \to \Os 0$ factors through
  $\Om_{\wt\Sigma}^0$ and hence $\alpha^q=\beta^q\circ \gamma^q$ where
  $\gamma^q:h^q(\Ox 0) \to h^q(\Om_{\wt\Sigma}^0)$ is the natural morphism
  corresponding to the embedding ${\wt\Sigma}\subseteq X$. It follows from
  (\ref{eq:6}) and (\ref{eq:7}) that
  
  \begin{multline}
    \label{eq:8} 
    \text{ $\gamma^q$ is an isomorphism for $q>\max \{ \db(X,\Sigma), \db(Z,Z\cap
      \Sigma)\}$,}\\ \qquad\text{ and a surjection for $q>\max \{ \db(X,\Sigma)-1,
      \db(Z,Z\cap \Sigma)\}$.}
  \end{multline}

  Next consider the long exact sequence of cohomology sheaves induced by the
  distinguished triangle of (\ref{eq:2}) for the pairs $(X,{\wt\Sigma})$ and
  $(Y,\Gamma)$:
  $$
  \cdots \to h^q(\Ox 0) \to h^q(\Om_{\wt\Sigma}^ 0) \oplus \myR^q\pi_*\Oy 0 \to
  \myR^q \pi_*\Og 0 \to h^{q+1}(\Ox 0) \to \cdots .
  $$

  By (\ref{eq:8}) it follows that the natural morphism
  $$
  \myR^q\pi_*\Oy 0 \to \myR^q \pi_*\Og 0
  $$
  is 
  \begin{center}
    an isomorphism for $q>\max \{ \db(X,\Sigma), \db(Z,Z\cap \Sigma)\}$, and \\
    a surjection for $q>\max \{ \db(X,\Sigma)-1, \db(Z,Z\cap \Sigma)\}$\phantom{,
      and}
  \end{center}
  As before, using the long exact sequence induced by (\ref{eq:1}) we obtain that
  $$
  \myR^q\pi_*\Oyg 0 = 0 \text{ for } q>\max \{ \db(X,\Sigma), \db(Z,Z\cap
  \Sigma)+1\}. 
  $$
\end{proof}

This theorem has several interesting consequences.

\begin{cor}\label{cor:dimZ}
  Let $X$ be an irreducible variety and $\pi: Y\to X$ a projective birational
  morphism.  Let $\Sigma\subseteq X$ be a subvariety and set $E=\exc(\pi)$,
  $Z=\overline{\pi(E)\setminus \Sigma}$, and
  $\Gamma=E\cup(\pi^{-1}\Sigma)_\text{red}$. 
  Then
  $$
  \myR^q\pi_*\Oyg 0 =0, \text{ for } q > \max( \db(X,\Sigma), \dim Z).
  $$
\end{cor}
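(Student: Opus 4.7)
The plan is to deduce this corollary directly from Theorem~\ref{thm:du-bois-pairs}, which already gives the bound
\[
\myR^q\pi_*\Oyg 0 = 0 \quad \text{for } q>\max\{\db(X,\Sigma),\,\db(Z,Z\cap\Sigma)+1\}.
\]
So it suffices to prove the purely numerical comparison
\[
\db(Z,Z\cap\Sigma)+1 \le \dim Z,
\]
a statement about the reduced pair $(Z,Z\cap\Sigma)$ alone, with no further reference to $Y$, $\pi$, or the rest of the data. Note that the irreducibility of $X$ together with $\Sigma \subsetneq X$ supplies the hypothesis of Theorem~\ref{thm:du-bois-pairs} that $\Sigma$ not contain an irreducible component of $X$.

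Next I would unpack the definition $Z = \overline{\pi(E)\setminus\Sigma}$. By construction $Z\setminus\Sigma$ is Zariski dense in $Z$, so every irreducible component of $Z$ meets $Z\setminus\Sigma$ in a (non-empty) dense open set. Consequently no irreducible component of $Z$ is contained in $\Sigma$, that is, $Z\cap\Sigma$ does not contain any irreducible component of $Z$. This is exactly the hypothesis required to apply Claim~\ref{claim:db-index-bound} to the reduced pair $(Z,Z\cap\Sigma)$, yielding $\db_x(Z,Z\cap\Sigma)\le \dim_x Z - 1$ for every $x\in Z$, and hence, after taking the maximum, $\db(Z,Z\cap\Sigma)\le \dim Z - 1$.

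The main — and essentially only — point of the argument is the observation in the previous paragraph. Once one recognizes that, by its very definition, $Z\cap\Sigma$ is a proper closed subscheme of each irreducible component of $Z$, the estimate is automatic from Claim~\ref{claim:db-index-bound}, and substituting it into Theorem~\ref{thm:du-bois-pairs} concludes the proof. The degenerate case where $\pi$ is already an isomorphism (so $E=\emptyset=Z$) requires no separate discussion: then $\myR\pi_*\Oyg 0 \simeq \Oxs 0$, and the required vanishing for $q>\db(X,\Sigma)$ holds by the very definition of the DB index.
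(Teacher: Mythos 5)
Your proposal is correct and follows exactly the paper's own route: apply Claim~\ref{claim:db-index-bound} to the pair $(Z,Z\cap\Sigma)$, noting that by the definition $Z=\overline{\pi(E)\setminus\Sigma}$ no irreducible component of $Z$ lies in $\Sigma$, to get $\db(Z,Z\cap\Sigma)\leq\dim Z-1$, and then invoke Theorem~\ref{thm:du-bois-pairs}. The extra remarks about the hypothesis on $\Sigma$ and the degenerate case $E=\emptyset$ are harmless elaborations of what the paper leaves implicit.
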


\begin{proof}
  By the definition of $Z$, $\Sigma$ cannot contain any irreducible component of $Z$
  and hence \eqref{claim:db-index-bound} implies that $\db(Z,Z\cap\Sigma)\leq \dim
  Z-1$. Therefore the statement follows from \eqref{thm:du-bois-pairs}.
\end{proof}

Finally we obtain a Steenbrink-type theorem for $p=0$ with some assumption on the
singularities of the pair $(X,\Sigma)$.

\begin{cor}
  Let $X$ be an irreducible variety and $\pi: Y\to X$ a resolution of singularities.
  Let $\Sigma\subseteq X$ be a subvariety and set $E=\exc(\pi)$,
  $Z=\overline{\pi(E)\setminus \Sigma}$, and
  $\Gamma=E\cup(\pi^{-1}\Sigma)_\text{red}$.  Assume that $\db(X,\Sigma)\leq \dim Z$
  and that $\Gamma$ is an snc divisor.  Then
  \begin{equation}
    \label{eq:10}
    \myR^q\pi_*\sI_{\Gamma\subseteq Y} =0, \text{ for } q > \dim Z.
  \end{equation}
  In particular, if $X$ is normal of dimension $n\geq 2$, then
  \begin{equation}
    \label{eq:11}
    \myR^{n-1}\pi_*\sI_{\Gamma\subseteq Y} =0.
  \end{equation}
\end{cor}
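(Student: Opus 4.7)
The plan is to reduce the statement to Corollary~\ref{cor:dimZ} by identifying the shifted graded piece $\Oyg 0$ with the ideal sheaf $\sI_{\Gamma\subseteq Y}$.

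First I would observe that since $Y$ is smooth and $\Gamma$ is an snc divisor on $Y$, the pair $(Y,\Gamma)$ is an snc pair. Applying (\ref{eq:4}) with $p=0$ yields
\begin{equation*}
  \Oyg 0 \qis \Omega_Y^0(\log\Gamma)(-\Gamma) \simeq \sO_Y(-\Gamma) = \sI_{\Gamma\subseteq Y},
\end{equation*}
so that $\myR^q\pi_*\sI_{\Gamma\subseteq Y} \simeq \myR^q\pi_*\Oyg 0$ for all $q$. Then (\ref{eq:10}) follows immediately from Corollary~\ref{cor:dimZ}: under the hypothesis $\db(X,\Sigma)\leq \dim Z$, we have $\max(\db(X,\Sigma),\dim Z)=\dim Z$, and Corollary~\ref{cor:dimZ} gives the vanishing of $\myR^q\pi_*\Oyg 0$ for $q>\dim Z$.

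For the ``in particular'' statement (\ref{eq:11}), I would use the normality of $X$. Since $X$ is normal and $\pi\colon Y\to X$ is a birational morphism from a smooth $Y$, the exceptional set satisfies $\codim_X\pi(E)\geq 2$, hence $\dim\pi(E)\leq n-2$. Because $Z=\overline{\pi(E)\setminus\Sigma}\subseteq\pi(E)$, it follows that $\dim Z\leq n-2<n-1$. Therefore $q=n-1$ satisfies $q>\dim Z$, and (\ref{eq:11}) is a special case of (\ref{eq:10}).

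I expect no serious obstacle here; the work has all been done in the preceding results. The only point that requires a moment's care is the dimension bound on $Z$ in the normal case, which is the classical fact that the exceptional locus of a birational morphism to a normal variety has codimension at least two. Everything else is a formal consequence of the identification $\Oyg 0 \qis \sI_{\Gamma\subseteq Y}$ coming from (\ref{eq:4}) and the vanishing established in Corollary~\ref{cor:dimZ}.
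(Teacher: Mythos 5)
Your proposal is correct and matches the paper's proof, which is exactly the one-line combination of (\ref{eq:4}) (identifying $\Oyg 0$ with $\sI_{\Gamma\subseteq Y}$ for the snc pair $(Y,\Gamma)$) with Corollary~\ref{cor:dimZ}. Your explicit justification of the ``in particular'' clause via $\dim Z\leq\dim\pi(E)\leq n-2$ for $X$ normal is the intended (and correct) reading, just spelled out more fully than the paper bothers to.
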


\begin{proof}
  Follows from (\ref{eq:4}) and \eqref{cor:dimZ}.
\end{proof}

\begin{rem}
  Note that for (\ref{eq:11}) one only needs that $\db(X,\Sigma) \leq n-2$. This
  should be considered a mild assumption given that $\db(X,\Sigma) \leq n-1$ always
  holds.
\end{rem}

\section{Vanishing for $p=1$}\label{sec:vanishing-p=1}

Notice that the assumption $p+q>\dim X$ in Steenbrink's theorem
\eqref{thm:steenbr-vanish}, as well as its generalization
\eqref{thm:gener-steenbr-vanish} means that these theorems are vacuous in the cases
$p\leq 1$, since $\myR^q\pi_*=0$ for $q\geq \dim X$ anyway.

We obtained an extension of this theorem under additional conditions for the case
$p=0$ in \ref{thm:du-bois-pairs}.  It turns out that by a simple argument one may
extend this vanishing also for the case $p=1$ and $q=n-1$.

First we need the following:

\begin{lem}[(Topological vanishing)]\cite[Lemma~14.4]{MR2854859}
  \label{lem:topvanishing}%
  
  \noindent
  Let $\pi: Y\to X$ be a projective morphism of reduced schemes of finite type over
  $\bC$ and $\Sigma\subseteq X$ a reduced closed subscheme such that $\pi$ is an
  isomorphism over $X\setminus\Sigma$.  Set $\Gamma=(\pi^{-1}\Sigma)_\text{red}$, $j:
  Y \setminus \Gamma \hookrightarrow Y$ the inclusion map, and $j_!\mathbb{C}_{Y
    \setminus \Gamma}$ the constant sheaf $\bC$ on $Y\setminus \Gamma$ extended by
  $0$ to the entire $Y$.
  Then $\myR^q\pi_*\bigl( j_!\mathbb{C}_{Y \setminus \Gamma}\bigr) = 0$ for all
  $q>0$.
\end{lem}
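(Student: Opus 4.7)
The plan is to apply proper base change to $\pi$ (which is projective, hence proper) and read off the stalks of the higher direct images fiber by fiber. For any $x\in X$ one has $(\myR^q\pi_*\sF)_x \simeq H^q(\pi^{-1}(x),\sF\vert_{\pi^{-1}(x)})$ for constructible sheaves $\sF$ of $\bC$-vector spaces in the analytic topology, and I would apply this with $\sF = j_!\bC_{Y\setminus\Gamma}$.

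I would then split into the two possible fiber types. For $x\in X\setminus \Sigma$ the hypothesis that $\pi$ is an isomorphism over $X\setminus \Sigma$ forces $\pi^{-1}(x)$ to be a single point of $Y\setminus \Gamma$; at such a point $j_!\bC_{Y\setminus\Gamma}$ is the constant sheaf $\bC$, and the cohomology of a one-point space vanishes in positive degrees. For $x\in \Sigma$ the fiber $\pi^{-1}(x)$ lies entirely inside $\Gamma$ by definition of $\Gamma$, so by definition of extension-by-zero the restriction $(j_!\bC_{Y\setminus \Gamma})\vert_{\pi^{-1}(x)}$ is the zero sheaf and all of its cohomology vanishes. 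Combining the two cases shows that every stalk of $\myR^q\pi_*(j_!\bC_{Y\setminus\Gamma})$ for $q>0$ is zero, so the sheaf itself is zero.

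An equivalent stalk-free packaging is to observe that $\pi\circ j$ factors as an isomorphism $Y\setminus \Gamma \xrightarrow{\sim} X\setminus \Sigma$ (the restriction of $\pi$) followed by the open immersion $\iota\colon X\setminus \Sigma\hookrightarrow X$. Since $\pi$ is proper, $\myR\pi_* = \myR\pi_!$, and the composition formula $\myR\pi_! \circ \myR j_! = \myR(\pi\circ j)_!$ reduces the claim to the vanishing $\myR^q\iota_!\bC_{X\setminus\Sigma}=0$ for $q>0$, which holds because $\iota_!$ is exact for an open immersion.

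The only real obstacle is setting up the correct framework: these are sheaves of $\bC$-vector spaces on the underlying analytic space (not coherent sheaves in the Zariski topology), and one must legitimately invoke proper base change, or equivalently the composition formula for $\myR\pi_!$, for constructible sheaves. Once that is granted, the content of the lemma is purely fiber-wise bookkeeping and no further argument is required.
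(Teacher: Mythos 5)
Your argument is correct, and it is essentially the argument the paper relies on: the paper gives no independent proof but simply invokes \cite[Lemma~14.4]{MR2854859} ``verbatim,'' and that cited proof is exactly your proper-base-change computation of the stalks of $\myR^q\pi_*\bigl(j_!\bC_{Y\setminus\Gamma}\bigr)$ over the two fiber types (a point of $Y\setminus\Gamma$ when $x\notin\Sigma$, a fiber contained in $\Gamma$ on which the extension-by-zero restricts to the zero sheaf when $x\in\Sigma$). Your alternative packaging via $\myR\pi_*=\myR\pi_!$ and $\myR(\pi\circ j)_!=\iota_!\circ(\text{iso})_*$ is a clean equivalent formulation of the same idea.
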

\begin{proof}
  The proof of \cite[Lemma~14.4]{MR2854859} works verbatim.
\end{proof}

\begin{thm}\label{thm:p=1}
  Let $\pi: Y\to X$ be a projective birational morphism of reduced schemes of finite
  type over $\bC$ and $\Sigma\subseteq X$ a reduced closed subscheme such that
  $\Sigma$ does not contain any irreducible components of $X$.  Set $E=\exc(\pi)$,
  $\Gamma=E\cup(\pi^{-1}\Sigma)_\text{red}$, and $Z=\overline{\pi(E)\setminus
    \Sigma}\subseteq X$.  Assume that $\db_x(X,\Sigma)\leq \dim_x X-2$ for all $x\in
  X$ and that $\codim_XZ\geq 2$ .  Then for any $x\in X$,
  $$(\myR^{\dim_xX-1}\pi_*\Oyg 1)_x=0.$$
  In particular,
  $$\myR^{\dim X-1}\pi_*\Oyg 1 =0.$$
\end{thm}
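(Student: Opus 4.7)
The plan is to exploit the stupid (Hodge) filtration on the Deligne--Du~Bois complex $\Oyg\bdot$ of the pair $(Y,\Gamma)$, combining topological vanishing (Lemma~\ref{lem:topvanishing}), the generalized Steenbrink vanishing (Theorem~\ref{thm:gener-steenbr-vanish}), and the $p=0$ vanishing from Theorem~\ref{thm:du-bois-pairs}.  Since the statement is local on $X$ and $\db_x$ is upper semicontinuous, I would first shrink $X$ to a neighborhood of $x$ and assume that $X$ is equidimensional of dimension $n=\dim_x X$ and that $\db(X,\Sigma)\le n-2$ globally.

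The preparatory step is to verify that Theorem~\ref{thm:gener-steenbr-vanish} can be applied in our setting.  This is not immediate, because $\pi$ need not be an isomorphism over $X\setminus\Sigma$; the remedy is to enlarge $\Sigma$ to $\Sigma\cup Z$.  Since $\pi$ is projective and birational with exceptional set $E$, the image $\pi(E)$ is closed and $\pi$ restricts to an isomorphism $Y\setminus E\to X\setminus\pi(E)$, so $\pi^{-1}(\pi(E))=E$; as $Z\subseteq\pi(E)$ by construction, this gives $\pi^{-1}(Z)_\text{red}\subseteq E$ and hence
\[
\Gamma \ = \ E\cup\pi^{-1}(\Sigma)_\text{red} \ = \ \pi^{-1}(\Sigma\cup Z)_\text{red}.
\]
Since $\dim Z\leq n-2$, the subscheme $\Sigma\cup Z$ contains no irreducible component of $X$, so Theorem~\ref{thm:gener-steenbr-vanish} applied to $(X,\Sigma\cup Z)$ yields $\myR^q\pi_*\Oyg p = 0$ for all $p+q>n$.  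A parallel application of Theorem~\ref{thm:du-bois-pairs}, combined with the bound $\db(Z,Z\cap\Sigma)\leq\dim Z-1\leq n-3$ coming from \eqref{claim:db-index-bound}, gives $\myR^q\pi_*\Oyg 0 = 0$ for all $q\geq n-1$.

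The main step is the hypercohomology spectral sequence associated to the stupid filtration on $\Oyg\bdot$,
\[
E_1^{p,q} \ = \ \myR^q\pi_*\Oyg p \ \Longrightarrow \ \myR^{p+q}\pi_*\bigl(\Oyg\bdot\bigr) \ = \ \myR^{p+q}\pi_*(j_!\bC_{Y\setminus\Gamma}),
\]
where the identification of the abutment uses (\ref{eq:12}).  Lemma~\ref{lem:topvanishing} forces the abutment to vanish for all $p+q>0$, so in particular $E_\infty^{1,n-1}=0$.  The incoming differentials at position $(1,n-1)$ originate from $E_1^{0,n-1}=\myR^{n-1}\pi_*\Oyg 0 = 0$ (for $r=1$) or from positions with $p<0$ (for $r\geq 2$), so every incoming differential vanishes.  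The outgoing differentials land in subquotients of $E_1^{1+r,n-r}=\myR^{n-r}\pi_*\Oyg{1+r}$, which vanish by generalized Steenbrink because $(1+r)+(n-r)=n+1>n$.  Consequently $E_1^{1,n-1}=E_\infty^{1,n-1}=0$, which is the desired conclusion.

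The main obstacle I anticipate is the clean verification of the identity $\Gamma=\pi^{-1}(\Sigma\cup Z)_\text{red}$, i.e.\ of the inclusion $\pi^{-1}(Z)_\text{red}\subseteq E$; once this identification is secured, every remaining ingredient is a direct application of results proved earlier in the paper.
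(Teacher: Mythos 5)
Your proposal is correct and follows essentially the same route as the paper: the spectral sequence of the filtered complex $\Oyg\bdot$ abutting to $\myR^{p+q}\pi_*(j_!\bC_{Y\setminus\Gamma})$, killed by topological vanishing, with the incoming differential at $(1,n-1)$ handled by Theorem~\ref{thm:du-bois-pairs} and the outgoing ones by Theorem~\ref{thm:gener-steenbr-vanish}. Your preparatory verification that $\Gamma=\pi^{-1}(\Sigma\cup Z)_{\text{red}}$, so that the generalized Steenbrink vanishing genuinely applies, is a detail the paper leaves implicit; it is correct and worth making explicit.
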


\begin{subrem}\label{rem:dim-is-at-least-2}
  Notice that the assumption on the DB index implies that $\dim X\geq \dim_x X\geq 2$
  for any $x\in X$.
\end{subrem}

\begin{proof}
  The statement is local on $X$, so we may restrict to a neighborhood of any $x\in X$
  and assume that $\db(X,\Sigma)=\db_x(X,\Sigma)$ and $\dim X=\dim_x X$. Let $n=\dim
  X$, which is at least $2$ by \eqref{rem:dim-is-at-least-2}.

  The quasi-isomorphism of (\ref{eq:12}) and the filtration of $\Oxs\bdot$ induces a
  spectral sequence computing $\myR^{i}\pi_*\bigl( j_!\mathbb{C}_{Y \setminus
    \Gamma}\bigr)$:
  \begin{equation} 
    \label{eq:13} 
    E_1^{p,q}=\myR^q\pi_*\Oxs p \Rightarrow 
    E_\infty^{p,q}= \myR^{p+q}\pi_*\bigl( j_!\mathbb{C}_{Y
      \setminus       \Gamma}\bigr) 
  \end{equation} 
  By \eqref{lem:topvanishing}, $E_\infty^{p,q}=0$ for $p+q>0$, so all $E_1^{p,q}$ in
  that range have to be killed in the spectral sequence.

  Next consider the differentials in the spectral sequence that map to or from
  $E_r^{1,n-1}$ for some $r>0$:
  \begin{align*}
    d_r: & \ E_r^{1-r, n+r-2} \to E_r^{1,n-1}\\
    d_r: & \qquad E_r^{1, n-1} \to E_r^{r+1,n-r}.
  \end{align*}
  Observe that $E_r^{1-r, n+r-2}=0$ trivially if $r>1$ and $E_1^{0,n-1}=0$ by
  \eqref{thm:du-bois-pairs} (cf.\ \ref{cor:dimZ}). Furthermore, $E_r^{r+1,n-r}=0$ by
  \eqref{thm:gener-steenbr-vanish} and hence the only way $E_\infty^{1,n-1}=0$ can
  happen is if already $E_1^{1,n-1}=0$ which is exactly the desired statement.
\end{proof}

This way we obtain the promised generalization and simplified proof of
\cite[Thm.~14.1]{MR2854859}

\begin{cor}\label{cor:gkkp-van}
  Let $X$ be a normal variety and $\pi: Y\to X$ a resolution of singularities.  Let
  $\Sigma\subseteq X$ be a subvariety and set $E=\exc(\pi)$ and
  $\Gamma=E\cup(\pi^{-1}\Sigma)_\text{red}$.  Assume that $\db(X,\Sigma)\leq \dim
  X-2$ and that $\Gamma$ is an snc divisor.  Then for all $p$,
  \begin{equation*}
    \myR^{\dim X-1}\pi_*\big(\Omega_{Y}^p(\log\Gamma)(-\Gamma)\big) =0.
  \end{equation*}
\end{cor}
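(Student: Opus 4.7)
The plan is to split the proof into three cases according to the value of $p$, reducing each to a vanishing result already established in the excerpt. Throughout, set $n = \dim X$. Since $\Gamma$ is snc, equation~(\ref{eq:4}) identifies $\Oyg p$ with $\Omega_Y^p(\log\Gamma)(-\Gamma)$, so it is enough to prove $\myR^{n-1}\pi_*\Oyg p = 0$ for each $p$.

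For $p = 0$, one has $\Omega_Y^0(\log\Gamma)(-\Gamma) = \sI_{\Gamma\subseteq Y}$, so the desired vanishing is exactly \eqref{eq:11}, and the remark following that corollary records that the assumption $\db(X,\Sigma)\leq n-2$ made here is enough. For $p = 1$, I would apply Theorem~\ref{thm:p=1} directly: its hypothesis on the DB index matches the one assumed, while $\codim_X Z \geq 2$ is automatic from normality of $X$, because for a proper birational morphism from a smooth variety onto a normal variety the exceptional locus satisfies $\codim_X \pi(E) \geq 2$, and $Z \subseteq \pi(E)$. For $p \geq 2$, I would reduce to Theorem~\ref{thm:gener-steenbr-vanish}. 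That theorem is phrased with $\Gamma = (\pi^{-1}\Sigma)_{\text{red}}$ rather than the enlarged $\Gamma = E \cup (\pi^{-1}\Sigma)_{\text{red}}$ of the corollary, so I would apply it with the replacement $\Sigma' := \pi(\Gamma)_{\text{red}}$: one checks that $\Sigma' \subsetneq X$, that $\pi$ is an isomorphism over $X\setminus\Sigma'$ (since $E\subseteq\Gamma$), and that $(\pi^{-1}\Sigma')_{\text{red}} = \Gamma$ (using that $\pi|_{Y\setminus E}$ is an isomorphism onto its image). Then Theorem~\ref{thm:gener-steenbr-vanish} yields $\myR^q\pi_*\Oyg p = 0$ whenever $p+q > n$, which covers $q = n-1$ precisely when $p \geq 2$.

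The only case carrying substantive content is $p = 1$, since $p = 0$ is a direct restatement of (\ref{eq:11}) and $p \geq 2$ follows from generalized Steenbrink vanishing after the bookkeeping above. The real obstacle has already been absorbed into the proof of Theorem~\ref{thm:p=1}; at the level of this corollary all that remains is to verify that normality of $X$ forces $\codim_X Z \geq 2$ and that the snc hypothesis allows one to move freely between $\Oyg p$ and $\Omega_Y^p(\log\Gamma)(-\Gamma)$ via~(\ref{eq:4}).
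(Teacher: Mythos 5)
Your proposal is correct and follows exactly the paper's own route: the paper proves the corollary by combining Theorem~\ref{thm:gener-steenbr-vanish} for $p>1$, Theorem~\ref{thm:du-bois-pairs} for $p=0$, and Theorem~\ref{thm:p=1} for $p=1$, together with the identification \eqref{eq:4}. You have merely spelled out the bookkeeping the paper leaves implicit (enlarging $\Sigma$ to $\pi(\Gamma)$ so that generalized Steenbrink applies, and using normality to get $\codim_X Z\geq 2$), all of which checks out.
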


\begin{proof}
  This is a direct consequence of the combination of \eqref{thm:gener-steenbr-vanish}
  for $p>1$, \eqref{thm:du-bois-pairs} for $p=0$, and \eqref{thm:p=1} for $p=1$.
\end{proof}

\section{An accidental exact sequence}\label{sec:an-accidental-exact}

As a sort of byproduct of the argument used to prove \eqref{thm:p=1} we also obtain
the following.

\begin{thm}
  Let $X$ and $Y$ be irreducible varieties and $\pi: Y\to X$ be a projective
  birational morphism.  Let $\Sigma\subseteq X$ be a subvariety and set
  $E=\exc(\pi)$, $Z=\overline{\pi(E)\setminus \Sigma}$, and
  $\Gamma=E\cup(\pi^{-1}\Sigma)_\text{red}$.  Assume that $\db(X,\Sigma)\leq \dim
  X-3$ and that $\codim_XZ\geq 3$.  Then there exists a 5-term exact sequence,
  $$
  \myR^{n-3}\pi_*\Oyg{2} \to \myR^{n-3}\pi_*\Oyg{3} \to \myR^{n-2}\pi_*\Oyg{1} \to
  \myR^{n-2}\pi_*\Oyg{2} \to 0.
  $$
\end{thm}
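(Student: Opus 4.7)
The plan is to use the auxiliary truncation $M := F^2/F^4\Oyg\bdot$, where $F^p\Oyg\bdot$ denotes the Hodge filtration on the Deligne-Du~Bois complex whose $p$-th graded piece is $\Oyg p[-p]$. The distinguished triangle
$$\Oyg 3[-3] \to M \to \Oyg 2[-2] \xrightarrow{+1}$$
together with the Steenbrink vanishing $\myR^{n-2}\pi_*\Oyg 3=0$ from \eqref{thm:gener-steenbr-vanish} yields, via the long exact sequence of $\myR\pi_*$ at degrees $n-1$ and $n$, an exact sequence
$$\myR^{n-3}\pi_*\Oyg 2 \to \myR^{n-3}\pi_*\Oyg 3 \to \myR^n\pi_*M \to \myR^{n-2}\pi_*\Oyg 2 \to 0.$$
Consequently, the proof reduces to producing an isomorphism $\myR^n\pi_*M \simeq \myR^{n-2}\pi_*\Oyg 1$.

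For this I would use the further distinguished triangle $M \to N \to \Oyg 1[-1] \xrightarrow{+1}$, where $N := F^1/F^4\Oyg\bdot$, since $N/M = F^1/F^2 = \Oyg 1[-1]$. Its long exact sequence at degree $n-1$ reads
$$\myR^{n-1}\pi_*N \to \myR^{n-2}\pi_*\Oyg 1 \to \myR^n\pi_*M \to \myR^n\pi_*N,$$
so the needed isomorphism holds provided both: (i) $\myR^n\pi_*N = 0$, and (ii) the map $\myR^{n-1}\pi_*N \to \myR^{n-2}\pi_*\Oyg 1$ is zero. For (i), combine the triangles $F^4\Oyg\bdot \to F^1\Oyg\bdot \to N$ and $F^1\Oyg\bdot \to \Oyg\bdot \to \Oyg 0$ with \eqref{lem:topvanishing} to obtain $\myR^k\pi_*F^1\Oyg\bdot \simeq \myR^{k-1}\pi_*\Oyg 0$ for $k\geq 2$; under the strengthened hypothesis $\db(X,\Sigma)\leq n-3$ this vanishes for $k\geq n-1$ by \eqref{cor:dimZ}, while $\myR^{n+1}\pi_*F^4\Oyg\bdot=0$ by Steenbrink, and these together force $\myR^n\pi_*N = 0$.

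The main obstacle is (ii). Tracing the spectral sequence for $N$, the image of $\myR^{n-1}\pi_*N \to \myR^{n-2}\pi_*\Oyg 1$ equals the top graded quotient $E_\infty^{1,n-2}$ of the filtration on $\myR^{n-1}\pi_*N$, viewed as a subobject of $E_1^{1,n-2}=\myR^{n-2}\pi_*\Oyg 1$; a short computation using $E_1^{0,n-2}=0$ (again from the strengthened DB-index) identifies this quotient with $\ker(d_2 : E_2^{1,n-2} \to E_2^{3,n-3})$ in the full spectral sequence $E_1^{p,q}=\myR^q\pi_*\Oyg p$ employed in the proof of \eqref{thm:p=1}. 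Thus (ii) amounts to the injectivity of this $d_2$. Since no higher incoming differentials can reach $(1,n-2)$, the convergence $E_\infty^{1,n-2}=0$ reduces this injectivity to showing that all outgoing higher differentials $d_r : E_r^{1,n-2} \to E_r^{1+r,n-r-1}$ for $r \geq 3$ vanish; the targets lie on the diagonal $p+q=n$ where Steenbrink does not directly apply, and it is precisely here that the strengthened hypothesis $\codim_X Z \geq 3$ must do the work—presumably by forcing, at higher pages, the vanishing of these diagonal target terms.
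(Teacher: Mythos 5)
Your construction is essentially the paper's argument repackaged: where the paper manipulates the spectral sequence $E_1^{p,q}=\myR^q\pi_*\Oyg{p}\Rightarrow\myR^{p+q}\pi_*\bigl(j_!\bC_{Y\setminus\Gamma}\bigr)=0$ (for $p+q>0$) directly, you encode the same information in long exact sequences for the truncations $M=F^2/F^4$ and $N=F^1/F^4$. The four-term sequence ending in $\myR^{n}\pi_*M\to\myR^{n-2}\pi_*\Oyg{2}\to0$ is correct, your verification of (i) is correct, and your identification of the remaining issue (ii) with the injectivity of $d_2\colon E_2^{1,n-2}\to E_2^{3,n-3}$ is also correct --- this is exactly the paper's key claim (\ref{eq:15}).

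But you do not prove that injectivity, and the mechanism you guess at is the wrong one. The strengthened hypotheses $\db(X,\Sigma)\le n-3$ and $\codim_XZ\ge3$ act through \eqref{thm:du-bois-pairs}: they give $E_1^{0,q}=\myR^q\pi_*\Oyg{0}=0$ for $q\ge n-2$, i.e.\ they remove the \emph{incoming} differentials from the $p=0$ column, and you have already spent them for exactly that purpose when you identified the image of $\myR^{n-1}\pi_*N\to\myR^{n-2}\pi_*\Oyg{1}$ with $\ker d_2$. They do not, and cannot, make the diagonal targets $E_r^{1+r,\,n-1-r}$ vanish: Steenbrink applies only for $p+q>n$, and on the diagonal $p+q=n$ the $E_1$-terms are typically nonzero (e.g.\ $E_1^{n,0}=\pi_*h^0(\Oyg{n})$, which is $\pi_*\omega_Y$ in the resolution case); such terms reach zero at $E_\infty$ only because differentials map \emph{onto} them, not because they vanish on their own. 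The paper's route to (\ref{eq:15}) runs in the opposite direction from your guess: since $E_1^{0,n-2}=0$, nothing maps into position $(1,n-2)$ on any page, so the convergence $E_\infty^{1,n-2}=0$ forces the outgoing differentials to annihilate $E_2^{1,n-2}=\ker d_1$, and from this the paper concludes that $d_2$ is injective. At a minimum you should replace your final ``presumably'' with this convergence argument; and note that even then one must explain why the later outgoing differentials $d_r\colon E_r^{1,n-2}\to E_r^{1+r,n-1-r}$ for $r\ge3$ (whose targets, as just noted, need not vanish) cannot be the ones responsible for the killing --- a point the paper passes over quickly and that your write-up, as it stands, does not address at all.
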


\begin{subrem}
  Note that the morphisms $\myR^{n-3}\pi_*\Oyg{2} \to \myR^{n-3}\pi_*\Oyg{3}$ and
  $\myR^{n-2}\pi_*\Oyg{1} \to \myR^{n-2}\pi_*\Oyg{2}$ in the above sequence are
  natural maps induced by the filtration on $\Oyg \bdot$. However, the map
  $\myR^{n-3}\pi_*\Oyg{3}\to \myR^{n-2}\pi_*\Oyg{1}$ is actually the inverse of a
  natural map from a subsheaf of $\myR^{n-2}\pi_*\Oyg{1}$ to a quotient sheaf of
  $\myR^{n-3}\pi_*\Oyg{3}$ that turns out to be an isomorphism.

  Also note that $\myR^{n-2}\pi_*\Oyg{1} \to \myR^{n-2}\pi_*\Oyg{2}$ is already
  surjective under the assumptions of Theorem~\ref{thm:p=1}.
\end{subrem}

\begin{proof}
  We will use the spectral sequence and notation introduced in the proof of
  \eqref{thm:p=1}. In particular, first consider the differentials
  \begin{equation}
    \label{eq:14}
    d_r: E_r^{t, n-t} \to E_r^{r+t,n-t-r+1},
  \end{equation}
  for any $t$, and observe that (as above) $E_r^{r+t,n-t-r+1}=0$ for all $r\geq 1$ by
  \eqref{thm:gener-steenbr-vanish}.  Next consider the differentials
  $$
  d_r: E_r^{2-r, n+r-3} \to E_r^{2,n-2},
  $$
  and observe that (as above) that $E_r^{2-r, n+r-3}=0$ trivially if $r>2$ and
  $E_2^{0,n-1}=0$ by \eqref{thm:du-bois-pairs} (cf.\ \ref{cor:dimZ}). Therefore, the
  only way $E_\infty^{2,n-2}=0$ can happen is if
  \begin{equation}
    \label{eq:19}
      d_1: E_1^{1, n-2} \to E_1^{2,n-2}
   \end{equation}
   is surjective. By \eqref{thm:du-bois-pairs} the differential $d_1:E_1^{0, n-2}\to
   E_1^{1, n-2}$ is $0$ (this is where we need the stronger assumptions on
   $\db(X,\Sigma)$ and $\codim_XZ$), so the kernel of the morphism in (\ref{eq:19})
   is equal to $E_2^{1,n-2}$, i.e., we have an exact sequence
  \begin{equation}
    \label{eq:16}
    \xymatrix{
      E_2^{1,n-2} \ar[r]^{\ker d_1} &  E_1^{1,n-2} \ar[r]^{d_1} & E_1^{2, n-2} \ar[r]
      & 0, 
    }
  \end{equation}
  and again by \eqref{thm:gener-steenbr-vanish} and \eqref{thm:du-bois-pairs} it
  follows that in order for $E_\infty^{1,n-2}=0$ to hold the next differential
  \begin{equation}
    \label{eq:15}
    d_2: E_2^{1,n-2} \into E_2^{3, n-3}
  \end{equation}
  has to be injective. 

  Next, by (\ref{eq:14}) we see that $E_1^{3,n-3}$ has to be killed by differentials
  mapping \emph{to} it, that is, by the differentials
  $$
  d_r: E_r^{3-r, n+r-4} \to E_r^{3,n-3}.
  $$
  As before, $E_r^{3-r, n+r-4}=0$ for $r>3$, and $E_r^{0, n-1}=0$ by
  \eqref{thm:du-bois-pairs}, so there are two differentials, $d_1$ and $d_2$ that can
  kill $E_1^{3,n-3}$. It follows that $E_2^{3,n-3}$ is the cokernel of $ d_1: E_1^{2,
    n-3} \to E_1^{3,n-3}$, i.e., we have an exact sequence
  \begin{equation}
    \label{eq:17}
    \xymatrix{ E_1^{2, n-3} \ar[r]^{d_1} & E_1^{3,n-3} \ar[r]^{{\coker d_1}} & 
      E_2^{3,n-3}, }    
  \end{equation}
  and that $d_2: E_2^{1,n-2} \to E_2^{3, n-3}$ has to be surjective. However, we have
  already seen in (\ref{eq:15}) that this $d_2$ is injective and hence it must be an
  isomorphism:
  \begin{equation}
    \label{eq:18}
    \xymatrix{
      d_2: E_2^{1,n-2} \ar[r]^-{\simeq} &  E_2^{3, n-3}.
    }
  \end{equation}
  Putting together (\ref{eq:16}), (\ref{eq:17}), and (\ref{eq:18}) gives the desired
  exact sequence: 
  $$
  \xymatrix{%
    E_1^{2, n-3} \ar[r]^{d_1} & E_1^{3,n-3} \ar[rrrr]^{{(\ker d_1)\,\circ\,
        (d_2^{-1})\,\circ\, (\coker d_1)}} &&&& E_1^{1,n-2} \ar[r]^{d_1} & E_1^{2,
      n-2} \ar[r] & 0. }
  $$
\end{proof}

\begin{rem}
  It is left for the reader to formulate the consequence of this theorem in the style
  of \eqref{cor:gkkp-van}.
\end{rem}


\def\cprime{$'$} \def\polhk#1{\setbox0=\hbox{#1}{\ooalign{\hidewidth
  \lower1.5ex\hbox{`}\hidewidth\crcr\unhbox0}}} \def\cprime{$'$}
  \def\cprime{$'$} \def\cprime{$'$} \def\cprime{$'$}
  \def\polhk#1{\setbox0=\hbox{#1}{\ooalign{\hidewidth
  \lower1.5ex\hbox{`}\hidewidth\crcr\unhbox0}}} \def\cdprime{$''$}
  \def\cprime{$'$} \def\cprime{$'$} \def\cprime{$'$} \def\cprime{$'$}
\providecommand{\bysame}{\leavevmode\hbox to3em{\hrulefill}\thinspace}
\providecommand{\MR}{\relax\ifhmode\unskip\space\fi MR}
\providecommand{\MRhref}[2]{%
  \href{http://www.ams.org/mathscinet-getitem?mr=#1}{#2}
}
\providecommand{\href}[2]{#2}

\end{document}